\pgfplotsset{compat=1.15}
\def\@tocline#1#2#3#4#5#6#7{\relax
	\ifnum #1>\c@tocdepth 
	\else
	\par \addpenalty\@secpenalty\addvspace{#2}%
	\begingroup \hyphenpenalty\@M
	\@ifempty{#4}{%
		\@tempdima\csname r@tocindent\number#1\endcsname\relax
	}{%
		\@tempdima#4\relax
	}%
	\parindent\z@ \leftskip#3\relax
	\advance\leftskip\@tempdima\relax
	\rightskip\@pnumwidth plus4em \parfillskip-\@pnumwidth
	#5\leavevmode\hskip-\@tempdima
	\ifcase #1
	\or\or \hskip 2em \or \hskip 2em \else \hskip 3em \fi%
	#6\nobreak\relax
	\dotfill\hbox to\@pnumwidth{\@tocpagenum{#7}}\par
	\nobreak
	\endgroup
	\fi} 
\newtheorem{theorem}{Theorem}[section]
\newtheorem{definition}[theorem]{Definition}
\newtheorem{lemma}[theorem]{Lemma}
\newtheorem{proposition}[theorem]{Proposition}
\newtheorem{corollary}[theorem]{Corollary}
\newtheorem{remark}[theorem]{Remark}
\crefname{section}{Sect.}{section}
\numberwithin{equation}{section}
\newcommand*\diff{\mathop{}\!\mathrm{d}}
\DeclareMathOperator{\supp}{supp}
\DeclareMathOperator{\const}{const.}
\begin{document}
	\title[Asymptotics for the infinite Brownian loop on noncompact 
	sym. spaces]{Asymptotics for the infinite Brownian loop on noncompact 
		symmetric spaces}
	
	\author{Effie Papageorgiou}
	
	\begin{abstract}
	The infinite Brownian loop  on a Riemannian manifold is the limit in distribution of the Brownian bridge of length $T$
	around a fixed origin when $T \rightarrow +\infty$.	The aim of this note is to study its long-time asymptotics on Riemannian symmetric spaces  $G/K$ of noncompact type and of general rank.
		This amounts to the behavior of solutions
		to the heat equation subject to the Doob transform induced by the ground spherical function.
		Unlike the standard Brownian motion, we observe in this
		case phenomena which are similar to the
		Euclidean setting, namely $L^1$ asymptotic convergence without requiring bi-$K$-invariance
		for initial data, and strong $L^{\infty}$ convergence.
	\end{abstract}
	
	\keywords{symmetric space, heat kernel, asymptotic behavior, long-time convergence, Brownian bridge,  ground state,  relativized process, Riemannian manifold, spherical function}
	
	\makeatletter
	\@namedef{subjclassname@2020}{\textnormal{2020}
		\it{Mathematics Subject Classification}}
	\makeatother
	\subjclass[2020]{22E30, 35B40, 35K05, 58J35, 43A85, 60F05, 58J65, 43A90}
	
	\maketitle
	\tableofcontents
	\section{Introduction}\label{Section.1 Intro}
	
	The heat equation is one of the most fundamental partial differential equations 
	in mathematics. 
	It has been extensively studied in various settings and is known to play a
	central role in several areas of mathematics (see for instance \cite{Gri2009}).
	
	Let $\mathcal{M}$ be a complete non-compact Riemannian manifold. Let $(X_t, \mathbb{P}_x)$ be the Brownian motion
	on $\mathcal{M}$, that is, the stochastic process generated by the Laplace-Beltrami operator $\Delta$. Let also
	$h_t(x, y)$ be the heat kernel on $\mathcal{M}$, that is, the minimal positive fundamental solution of the heat
	equation $\partial_t u = \Delta u$ on $(0, \infty) \times \mathcal{M}$. Then $h_t(x,y)$ is also the transition density of $X_t$, which
	means that for any Borel set $A \subset\mathcal{M}$, 
	$$\mathbb{P}_x(X_t \in A) =\int_A \diff{\mu}(y)\,
	h_t( x, y)$$
	where $\diff{\mu}(y)$ denotes the Riemannian measure.

	The following classical long-time asymptotic convergence result
	corresponds to the {\it Central Limit Theorem} of probability in the PDE setting. 
	We refer to the expository survey \cite{Vaz2018} for more details on this property. 
	
	\begin{theorem}
		Consider the heat equation
		\begin{align}\label{S1 HE intro}
			\begin{cases}
				\partial_{t}u(t,x)\,
				=\,\Delta_{\mathbb{R}^{n}}u(t,x),
				\qquad\,t>0,\,\,x\in\mathbb{R}^{n}\\[5pt]
				u(0,x)\,=\,f(x),
			\end{cases}
		\end{align}
		where the initial data $f$ belongs to $L^{1}(\mathbb{R}^n)$.
		Denote by $M=\int_{\mathbb{R}^n}\diff{x}\,f(x)$ the mass of $f$
		and by $G_{t}(x,y)\,=\,(4\pi{t})^{-n/2}e^{-|x-y|^{2}/4t}$ the heat kernel.
		Then the solution to \eqref{S1 HE intro} satisfies:
		\begin{align} \label{S1 L1 R}
			\|u(t,\,\cdot\,)-MG_{t}(\cdot, x_0)\|_{L^{1}(\mathbb{R}^n)}\,
			\longrightarrow\,0
		\end{align}
		and
		\begin{align}\label{S1 Linf R}
			t^{\frac{n}{2}}\,\|u(t,\,\cdot\,)-MG_{t}(\cdot, x_0)\|_{L^{\infty}(\mathbb{R}^n)}\,
			\longrightarrow\,0
		\end{align}
		as $t\rightarrow\infty$. The $L^p$ ($1<p<\infty$) norm estimates 
		follow by interpolation.
	\end{theorem}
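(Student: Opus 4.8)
The plan is to reduce the statement, via the parabolic rescaling $x\mapsto\sqrt{t}\,x$, to two soft facts about a \emph{fixed} Gaussian profile — continuity of translations on $L^{1}$ and uniform continuity — so that the limit $t\to\infty$ turns into a translation by an amount tending to $0$. We may assume $x_{0}=0$. Put $g_{t}(x):=G_{t}(x,0)=(4\pi t)^{-n/2}e^{-|x|^{2}/4t}$, so that the solution of \eqref{S1 HE intro} is $u(t,\cdot)=g_{t}*f$; since $M=\int_{\mathbb{R}^{n}}\diff y\, f(y)$ we may write $M\,G_{t}(x,0)=\int_{\mathbb{R}^{n}}\diff y\, g_{t}(x)\,f(y)$, and hence
\begin{equation*}
u(t,x)-M\,G_{t}(x,0)=\int_{\mathbb{R}^{n}}\diff y\,\bigl(g_{t}(x-y)-g_{t}(x)\bigr)f(y).
\end{equation*}

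For \eqref{S1 L1 R} I would take the $L^{1}_{x}$-norm and apply Tonelli's theorem to obtain
\begin{equation*}
\|u(t,\cdot)-M\,G_{t}(\cdot,0)\|_{L^{1}(\mathbb{R}^{n})}\le\int_{\mathbb{R}^{n}}\diff y\,\|\tau_{y}g_{t}-g_{t}\|_{L^{1}(\mathbb{R}^{n})}\,|f(y)|,
\end{equation*}
where $\tau_{y}$ denotes translation by $y$. From $g_{t}(x)=t^{-n/2}g_{1}(x/\sqrt{t})$, the substitution $x=\sqrt{t}\,z$ gives $\|\tau_{y}g_{t}-g_{t}\|_{L^{1}}=\|\tau_{y/\sqrt{t}}g_{1}-g_{1}\|_{L^{1}}$, which tends to $0$ as $t\to\infty$ by continuity of translations on $L^{1}(\mathbb{R}^{n})$. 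The integrand being dominated by $2|f(y)|\in L^{1}$, dominated convergence yields \eqref{S1 L1 R}.

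For \eqref{S1 Linf R}, from the same identity,
\begin{equation*}
t^{n/2}\,|u(t,x)-M\,G_{t}(x,0)|\le\int_{\mathbb{R}^{n}}\diff y\,\Bigl(\sup_{x\in\mathbb{R}^{n}}t^{n/2}|g_{t}(x-y)-g_{t}(x)|\Bigr)|f(y)|.
\end{equation*}
Writing $\varphi(z):=(4\pi)^{-n/2}e^{-|z|^{2}/4}$ one has $t^{n/2}g_{t}(x)=\varphi(x/\sqrt{t})$, so after $x=\sqrt{t}\,z$ the inner supremum equals $\omega(y/\sqrt{t})$, where $\omega(w):=\sup_{z}|\varphi(z-w)-\varphi(z)|$ is the modulus of continuity of $\varphi$. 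Since $\varphi$ is continuous and vanishes at infinity, it is uniformly continuous, so $\omega(w)\to 0$ as $w\to 0$, while $\omega\le 2\|\varphi\|_{L^{\infty}}$; dominated convergence then gives \eqref{S1 Linf R}. Finally, the $L^{p}$ estimates for $1<p<\infty$ follow from $\|h\|_{L^{p}}\le\|h\|_{L^{1}}^{1/p}\|h\|_{L^{\infty}}^{1-1/p}$ applied to $h=u(t,\cdot)-M\,G_{t}(\cdot,0)$.

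Being a classical fact, this requires no genuine obstacle; the only step needing a little care is the uniformity in $x$ in \eqref{S1 Linf R}, which is precisely what the rescaling $x=\sqrt{t}\,z$ supplies, by reducing matters to the (uniform) continuity of the single fixed profile $\varphi$ at the origin. An alternative, slightly longer route would be to first establish both limits for $f\in C_{c}^{\infty}(\mathbb{R}^{n})$ — where one may differentiate under the integral sign and estimate directly — and then pass to general $f\in L^{1}$ using the contractions $\|g_{t}*(f_{1}-f_{2})\|_{L^{1}}\le\|f_{1}-f_{2}\|_{L^{1}}$ and $\|(M_{1}-M_{2})G_{t}(\cdot,0)\|_{L^{1}}=|M_{1}-M_{2}|$.
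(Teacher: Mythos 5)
Your proof is correct. The decomposition $u(t,x)-MG_{t}(x,x_0)=\int\diff y\,\bigl(g_{t}(x-y)-g_{t}(x)\bigr)f(y)$, followed by Minkowski/Tonelli and the parabolic rescaling $x=\sqrt{t}\,z$ reducing everything to continuity of translations on $L^{1}$ (resp.\ uniform continuity of the fixed profile $\varphi$) at the origin, is the standard textbook argument, and each step — the domination by $2|f(y)|$, the identity $\|\tau_{y}g_{t}-g_{t}\|_{L^{1}}=\|\tau_{y/\sqrt{t}}g_{1}-g_{1}\|_{L^{1}}$, and the interpolation inequality $\|h\|_{L^{p}}\le\|h\|_{L^{1}}^{1/p}\|h\|_{L^{\infty}}^{1-1/p}$ — checks out. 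Be aware, though, that the paper does not prove this statement at all: it is quoted in the introduction as a classical fact, with a pointer to V\'azquez's survey, so there is no in-paper proof to compare against. It is worth noting that your self-similarity argument is exactly what is unavailable in the paper's actual setting $G/K$: there the heat kernel is not a rescaling of a fixed profile, and the paper instead compares the \emph{ratio} $h_{t}(g^{-1}y)/h_{t}(g^{-1})$ with $\varphi_{0}(g^{-1}y)/\varphi_{0}(g^{-1})$ inside a critical region (its Lemma~2.2), which is the analogue of your ``translation by $y/\sqrt{t}\to0$'' step; your alternative route via density of $C_{c}^{\infty}$ and the $L^{1}$-contraction is in fact closer in spirit to how the paper extends its results from compactly supported to more general initial data in Section~3.4.
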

	
	The convergence properties (\ref{S1 L1 R}) and (\ref{S1 Linf R}) have an
	interesting probabilistic meaning,  that the Brownian motion eventually
	\textquotedblleft forgets\textquotedblright\ about its starting point $%
	x_{0}, $ which corresponds to the fact that $X_{t}$ escapes to $\infty $
	rotating chaotically in angular direction. Similar properties are true in the case of volume doubling Riemannian manifolds as long as some properties of the heat kernel are satisfied (e.g. two-sided heat kernel estimates of the Li-Yau type, as is true on manifolds with nonnegative Ricci curvature) but surprisingly fail in certain manifolds with ends, see \cite{GPZ22}.
	
	The situation is drastically different in hyperbolic spaces. It was shown by
	V\'{a}zquez \cite{Vaz2019} that (\ref{S1 L1 R}) fails for a general initial
	function $u_{0}\in L^{1}\left( \mathbb{H}^{n}\right) $, even if compactly supported, but is still true if $%
	u_{0}$ is spherically symmetric around $x_{0}.$   Recall that in hyperbolic spaces  
	Brownian motion $X_{t}$ tends to escape to $\infty $ along geodesics, which
	means that it \textquotedblleft remembers\textquotedblright\ at least the
	direction of the starting point $x_{0}.$ For full generalizations of the previous result in Riemannian symmetric spaces of noncompact type, we refer to 
	the recent paper \cite{APZ2023},
	where  some arguments in \cite{Vaz2019} are also clarified. Note that these spaces
	have nonpositive sectional curvature. Besides the Laplace-Beltrami operator, a second Laplacian is considered in \cite{APZ2023}, the so-called distinguished Laplacian. In this paper we consider the Doob transform of the Brownian motion arising by the unique bi-$K$-invariant  ground spherical function. This new process is the limit in distribution of the Brownian bridge of length $L$
	around a fixed origin $x_o$, when $L\rightarrow +\infty$.  Let us elaborate.
	
	Anker, Bougerol and Jeulin introduced in \cite{ABJ02} the concept of the ``infinite Brownian loop" on Riemannian manifolds, which is roughly speaking the limit (if it exists) of the
	Brownian motion constrained to come back to its starting point at a very
	large time (see Section \ref{Section.2 Prelim} for a precise definition).  Often, the infinite Brownian loop is the Brownian motion itself (e.g. when the manifold is of nonnegative Ricci curvature), but this is not the case on non-compact symmetric spaces	$\mathbb{X}=G/K$ of dimension $n\geq 2$. To describe this new Markov process, let us introduce some notation. Let
	$\Delta$ be the Laplace-Beltrami operator on the Riemannian manifold $(\mathbb{X}, \mu)$ and $h_t$ be the associated heat kernel. Then, the bottom of the spectrum is equal to $|\rho|^2$ and let $\varphi_{0}$ be the ground spherical function, which is positive.
	Consider the new manifold $(\mathbb{X}, \widetilde{\mu})$, where $\diff{\widetilde{\mu}}=\varphi_{0}^2 \,\diff{\mu}$. 
	Denote by $\Delta_{\widetilde{\mu}}$ the new, relativized Laplacian and by $\widetilde{{h}_{t}}=e^{|\rho|^{2}t}h_{t}/\phi_0$
	the associated heat kernel. Then, the infinite Brownian loop  is exactly the relativized
	$\varphi_0$-process.  
	
	Consider the Cauchy problem
	\begin{align}\label{S1 HE S}
		\partial_{t}u(t,gK)\,
		=\,\Delta_{\widetilde{\mu}}u(t,gK),
		\qquad
		{u}(0,gK)\,=\,f(gK).
	\end{align}
	Denote by 
	$\widetilde{M}=\tfrac{\frac{f}{\varphi_0}*{\varphi}_{0}}{\varphi_{0}}$ the mass function
	on $\mathbb{X}$ which generalizes the mass in the Euclidean case (see \cref{rmk mass}). Here, the convolution is realized on $(\mathbb{X}, \widetilde{\mu})$ (with a slight but obvious abuse of notation).
	Then, we show the following long-time asymptotic convergence results.
	\begin{theorem}\label{S1 Main thm 2}
		Let $f$ be continuous and compactly supported initial data on
		$\mathbb{X}$. Then, the solution to the heat equation \eqref{S1 HE S} satisfies
		\begin{align}\label{S1 L1 disting}
			\|u(t,\,\cdot\,)-
			\widetilde{M}\,\widetilde{h_t}\|_{L^{1}(\widetilde{\mu})}\,
			\longrightarrow\,0
		\end{align}
		and
		\begin{align}\label{S1 Linf disting}
			t^{\frac{\nu+n}{4}}
			\|u(t,\,\cdot\,)-
			\widetilde{M}\,\widetilde{h_t}\|_{L^{\infty}(\widetilde{\mu})}\,
			\longrightarrow\,0
		\end{align}
		as $t\rightarrow\infty$. Here $\nu$ denotes the dimension at infinity of $G/K$.
		Analogous $L^p$ ($1<p<\infty$) norm estimates 
		follow by interpolation.
	\end{theorem}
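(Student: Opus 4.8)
The plan is to reduce the relativized Cauchy problem \eqref{S1 HE S} to the ordinary heat equation on $\mathbb{X}$ by the ground-state conjugation. Since $\Delta\varphi_{0}=-|\rho|^{2}\varphi_{0}$, one has $\Delta_{\widetilde\mu}=\varphi_{0}^{-1}(\Delta+|\rho|^{2})\,\varphi_{0}$ and hence $e^{t\Delta_{\widetilde\mu}}=e^{|\rho|^{2}t}\,\varphi_{0}^{-1}\,e^{t\Delta}\,\varphi_{0}$, so that
\[
u(t,\,\cdot\,)=\frac{e^{|\rho|^{2}t}}{\varphi_{0}}\,\bigl((f\varphi_{0})*h_{t}\bigr),
\]
with $*$ the convolution on $\mathbb{X}$ and $h_{t}$ the bi-$K$-invariant heat kernel. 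Everything is thereby reduced to the long-time behaviour of $e^{t\Delta}$ acting on the compactly supported function $f\varphi_{0}$, and to replacing $h_{t}$ by its large-time profile.

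The analytic engine is the sharp long-time heat-kernel asymptotics on $\mathbb{X}$ recalled in \cref{Section.2 Prelim} (of Anker--Ji / Anker--Ostellari type): there is $c_{0}>0$ with $t^{\nu/2}e^{|\rho|^{2}t}h_{t}(x,y)\to c_{0}\,\varphi_{0}(d(x,y))$ as $t\to\infty$, uniformly on compacts, together with the matching global two-sided estimates carrying the Gaussian factor $e^{-d(x,y)^{2}/4t}$ and the polynomial corrections, and a Gaussian-scale refinement valid uniformly for $d(x,y)=O(\sqrt t)$. Write $\Theta(x,y)=\lim_{t}t^{\nu/2}\widetilde{h_{t}}(x,y)\big/\lim_{t}t^{\nu/2}\widetilde{h_{t}}(x,x_{o})$ for the limiting shape of the relativized heat kernel, normalized so that $\Theta(x,x_{o})=1$; with this normalization $\widetilde M(x)=\int_{\mathbb{X}}f(y)\,\Theta(x,y)\,\diff\widetilde\mu(y)$ is the mass function appearing in the statement (cf. \cref{rmk mass}), and one obtains the Euclidean-CLT-type identity
\[
u(t,x)-\widetilde M(x)\,\widetilde{h_{t}}(x,x_{o})=\int_{\mathbb{X}}f(y)\,\bigl[\widetilde{h_{t}}(x,y)-\Theta(x,y)\,\widetilde{h_{t}}(x,x_{o})\bigr]\,\diff\widetilde\mu(y).
\]
Two facts are used throughout: $\widetilde{h_{t}}(x_{o},\,\cdot\,)$ and $\Theta(\,\cdot\,,y)\,\widetilde{h_{t}}(\,\cdot\,,x_{o})$ are, for every $t$ and $y$, probability densities for $\widetilde\mu$ (their total mass is $1$, because $\varphi_{0}$ is exactly the ground state), and $\|\widetilde{h_{t}}(\,\cdot\,,x_{o})\|_{L^{\infty}(\widetilde\mu)}$ decays polynomially in $t$.

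For \eqref{S1 L1 disting} I would bound
\[
\bigl\|u(t,\,\cdot\,)-\widetilde M\,\widetilde{h_{t}}(\,\cdot\,,x_{o})\bigr\|_{L^{1}(\widetilde\mu)}\le\int_{\mathbb{X}}|f(y)|\,\bigl\|\widetilde{h_{t}}(\,\cdot\,,y)-\Theta(\,\cdot\,,y)\,\widetilde{h_{t}}(\,\cdot\,,x_{o})\bigr\|_{L^{1}(\widetilde\mu)}\,\diff\widetilde\mu(y),
\]
and, since both terms of the inner norm are probability densities, reduce — by dominated convergence in $y$, using $f\in C_{c}$ — to proving that their total-variation distance tends to $0$ for each fixed $y$. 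This splits into a bulk region $d(\,\cdot\,,x_{o})=O(\sqrt t)$, where the Gaussian-scale refinement gives $\widetilde{h_{t}}(x,y)\big/\bigl(\Theta(x,y)\widetilde{h_{t}}(x,x_{o})\bigr)\to1$ uniformly, and a tail $d(\,\cdot\,,x_{o})>M\sqrt t$, contributing at most the tail mass of each density, which is $\le\varepsilon(M)\to0$ as $M\to\infty$ because — the point to stress — these tails are \emph{genuinely Gaussian} over a space of polynomial volume growth (dimension $\nu$ at infinity): the Doob transform by $\varphi_{0}$ removes the exponential drift, so the infinite Brownian loop diffuses rather than escaping ballistically. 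In particular no bi-$K$-invariance of $f$ is needed: once the difference is an integral $\int f(y)(\cdots)\,\diff\widetilde\mu(y)$ the problem has collapsed to a per-$y$ statement about the heat kernel, in which $f$ enters only through the harmless weight $|f(y)|$. This is exactly what fails for the bare Laplacian, where the analogous per-$y$ differences do not tend to $0$ in $L^{1}$ — the ballistically escaping masses converge to distinct harmonic measures — so that only averaging against a $K$-invariant $f$ rescues the Euclidean-type statement (V\'{a}zquez). The $L^{p}$, $1<p<\infty$, estimates then follow by interpolation with the $L^{\infty}$ bound.

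The rate statement \eqref{S1 Linf disting} is the delicate part, and I expect it to be the main obstacle. One has to estimate the bracket $\widetilde{h_{t}}(x,y)-\Theta(x,y)\widetilde{h_{t}}(x,x_{o})$ \emph{uniformly in $x$}, inserting the next-order term of the long-time heat-kernel expansion; a gain of order $t^{-1/2}$ over the leading order appears on the Gaussian scale, while turning this into an $L^{\infty}$ bound — via the short-time, local smoothing of $e^{t\Delta}$, which is governed by the geometric dimension $n$ — costs further powers of $t$ measured by $n$ on top of those measured by the dimension at infinity $\nu$; the net normalization is $t^{(\nu+n)/4}$, collapsing to $t^{n/2}$ when $\nu=n$, in agreement with \eqref{S1 Linf R}. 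The technical crux — shared with the tightness step — is the uniform control of the heat-kernel asymptotics in the borderline regime $d(x,x_{o})\sim\sqrt t\to\infty$, where the Gaussian factor $e^{-d(x,x_{o})^{2}/4t}$ competes with the polynomial corrections, and in particular extracting the precise exponent $\tfrac{\nu+n}{4}$ rather than merely \emph{some} exponent. The overall architecture is close in spirit to \cite{APZ2023}, the $\varphi_{0}$-relativization being what makes the symmetric-space argument mirror the Euclidean one and dispenses with the bi-$K$-invariance of the data required for the bare Laplacian.
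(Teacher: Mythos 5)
Your $L^{1}$ argument is, modulo packaging, the paper's own. The identity you write is exactly \eqref{S4 difference}: your $\Theta(x,y)$ equals $\varphi_{0}(y^{-1}x)/(\varphi_{0}(x)\varphi_{0}(y))$, so $\int f\,\Theta\,\diff\widetilde\mu=\widetilde M$ as in \eqref{massNEW}, and the bracket $\widetilde{h_t}(x,y)-\Theta(x,y)\widetilde{h_t}(x,x_{o})$ is $\widetilde{h_t}(x,x_{o})\varphi_{0}(y)^{-1}\bigl\{\tfrac{h_{t}(y^{-1}x)}{h_{t}(x)}-\tfrac{\varphi_{0}(y^{-1}x)}{\varphi_{0}(x)}\bigr\}$. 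Your ``bulk'' claim that this ratio tends to $1$ uniformly for $d(x,x_{o})=O(\sqrt t)$ and $y$ in a compact set is precisely \cref{S4 Lemma ratios difference} (whose point, as the paper stresses, is that it holds up to the walls, which is why no bi-$K$-invariance of $f$ is needed); your Gaussian-tail bound plays the role of the concentration proposition; the per-$y$ total-variation reduction followed by dominated convergence is a clean equivalent of the paper's two-region estimate. This part is fine.

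The $L^{\infty}$ statement is where the proposal has a genuine gap, and the gap is not where you locate it. No second-order term of the heat-kernel expansion and no short-time local-smoothing argument enter; the mechanism is the \emph{same} first-order ratio lemma, because the error in the critical region is multiplicative in $\widetilde{h_t}$: by \cref{S4 Lemma ratios difference} with $r(t)=\sqrt t/\varepsilon(t)$ one gets $|u(t,g)-\widetilde M(g)\widetilde{h_t}(g)|\lesssim \widetilde{h_t}(g)\,(\varepsilon(t)\sqrt t)^{-1}$ uniformly for $|g^{+}|\le\sqrt t/\varepsilon(t)$. What the proof then needs, and what you do not supply, are two quantitative facts about $\widetilde{h_t}$ alone: (i) the two-sided sup-norm bound $\|\widetilde{h_t}\|_{L^{\infty}(\widetilde\mu)}\asymp t^{-(\nu+n)/4}$ of \cref{S4 htilde estimate proposition}, obtained by maximizing the global estimate \eqref{S2 heat kernel} over $g$ (the supremum being located at scale $|g^{+}|\asymp\sqrt t$), not by refining the asymptotics; and (ii) the sup-norm concentration of \cref{Linftyconc}, $t^{(\nu+n)/4}\|\widetilde{h_t}\|_{L^{\infty}(\mathbb X\smallsetminus K\exp R_{t})}\to0$, which together with the boundedness of $\widetilde M$ disposes of $|g^{+}|>\sqrt t/\varepsilon(t)$. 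Your heuristic (``gain of $t^{-1/2}$ on the Gaussian scale plus $n$ powers from local smoothing'') does not produce either ingredient and, as you concede, leaves the exponent $(\nu+n)/4$ unextracted. If you do carry out step (i), be careful: at the origin $\widetilde{h_t}(e)\asymp t^{-\nu/2}$ by \eqref{S2 heat kernel}, so the claimed exponent must be confronted with the full range $|g^{+}|\lesssim\sqrt t$ and not only with the value at the peak of the Gaussian profile; this is exactly the computation in \eqref{S4 htilde1}--\eqref{S4 htilde1''} and it is the one nontrivial point of the whole $L^{\infty}$ argument.
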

	
	\begin{remark}
		Let us comment about \eqref{S1 L1 disting} and \eqref{S1 Linf disting}.
		Firstly, the $L^1$ convergence \eqref{S1 L1 disting} holds 
		without the bi-$K$-invariance restriction on initial data, as in the case of the distinguished Laplacian and in contrast to the case of the Laplace-Beltrami operator, \cite{APZ2023}.  Also,  
		the sup norm estimate $\eqref{S1 Linf disting}$ resembles the Euclidean setting and the case of the distinguished Laplacian $\Delta_d$. This strong convergence is not true in the case of the Laplace-Beltrami operator, \cite[Remark 3.6]{APZ2023}. Secondly, notice that the heat kernel $\widetilde{h_t}$ is bi-$K$-invariant as in the case of the semigroup $e^{t\Delta}$, but not in the case of $e^{t\Delta_d}$.
		Yet, the mass $\widetilde{M}$ is a bounded function and no more a constant in general.
		Thirdly, the power $(\nu+n)/4$ which occurs in the time factor, is always larger than $(\ell+n)/4$ appearing in the distinguished Laplacian case, \cite[Theorem 1.6]{APZ2023}.  
		Indeed, here $\ell$ denotes the rank of $G/K$ and $\Sigma_{r}^{+}$ the set of positive reduced roots, and
		$\nu=\ell+2|\Sigma_{r}^{+}|$ the dimension at infinity. For example in real hyperbolic space, we have $\ell=1$, $|\Sigma_{r}^{+}|=1$, $\nu=3$.
	\end{remark}

	\begin{remark}
		The asymptotic convergences \eqref{S1 L1 disting} and \eqref{S1 Linf disting}
		hold for some larger classes of initial data, see \cref{Subsect other data}.
		These classes are not optimal and finding the right function space is
		an interesting question for further study.
	\end{remark}
	
	Apart from the results themselves, we want to emphasize the strategy to solving such problems, as it may be useful for settings where one can get delicate results for the heat kernel such as asymptotics (e.g. Damek-Ricci spaces, homogeneous trees, or even for Doob transforms of the Euclidean heat kernel). In this sense, one has to compare a quotient of two heat kernels inside the critical region (see \cref{S4 Lemma ratios difference})  to get information for the asymptotic behavior of the solutions to the heat equation (see \cref{section3}). For instance, it is essentially the large time asymptotics of this quotient that show what the mass should be and how it behaves under different initial data.
	
	This paper is organized as follows. After the present introduction
	in \cref{Section.1 Intro} and preliminaries in \cref{Section.2 Prelim}, 
	we deal with the long-time asymptotic behavior of solutions to the heat equation
	associated with the relativized Laplacian  on symmetric spaces in 
	\cref{section3}, arising as a specific Doob transform. 
	After specifying the critical region in this context, we study the long-time
	convergence in $L^1$ and in $L^{\infty}$ with compactly supported initial
	data. Questions associated with other initial data are discussed at the end 
	of the paper.
	
	Throughout this paper, the notation
	$A\lesssim{B}$ between two positive expressions means that 
	there is a constant $C>0$ such that $A\le{C}B$. 
	The notation $A\asymp{B}$ means that $A\lesssim{B}$ and $B\lesssim{A}$.
	
	\section{Preliminaries}\label{Section.2 Prelim}
	In this section, we review spherical Fourier analysis 
	on Riemannian symmetric spaces of noncompact type. The notation is standard 
	and follows \cite{Hel1978,Hel2000,GaVa1988}. 
	Next we recall bounds and asymptotics of the heat kernel, for which 
	we refer to \cite{AnJi1999,AnOs2003} for more details in this setting.  We next recall the definition of the Doob transform on general Riemannian manifolds, following \cite{Gri2009} and finally we describe the infinite Brownian loop, for which we refer to \cite{ABJ02}.
	
	\subsection{Noncompact Riemannian symmetric spaces}
	Let $G$ be a semi-simple Lie group, connected, noncompact, with finite center, 
	and $K$ be a maximal compact subgroup of $G$. The homogeneous space 
	$\mathbb{X}=G/K$ is a Riemannian symmetric space of noncompact type.
	Let $\mathfrak{g}=\mathfrak{k}\oplus\mathfrak{p}$ be the Cartan decomposition 
	of the Lie algebra of $G$. The Killing form of $\mathfrak{g}$ induces 
	a $K$-invariant inner product $\langle\,.\,,\,.\,\rangle$ on $\mathfrak{p}$, 
	hence a $G$-invariant Riemannian metric on $G/K$.
	We denote by $d(\,.\,,\,.\,)$ the Riemannian distance on $\mathbb{X}$.
	
	Fix a maximal abelian subspace $\mathfrak{a}$ in $\mathfrak{p}$. 
	The rank of $\mathbb{X}$ is the dimension $\ell$ of $\mathfrak{a}$.
	We identify $\mathfrak{a}$ with its dual $\mathfrak{a}^{*}$ 
	by means of the inner product inherited from $\mathfrak{p}$.
	Let $\Sigma\subset\mathfrak{a}$ be the root system of 
	$(\mathfrak{g},\mathfrak{a})$ and denote by $W$ the Weyl group 
	associated with $\Sigma$. 
	Once a positive Weyl chamber $\mathfrak{a}^{+}\subset\mathfrak{a}$ 
	has been selected, $\Sigma^{+}$ (resp. $\Sigma_{r}^{+}$ 
	or $\Sigma_{s}^{+}$)  denotes the corresponding set of positive roots 
	(resp. positive reduced, i.e., indivisible roots or simple roots).
	Let $n$ be the dimension and $\nu$ be the pseudo-dimension 
	(or dimension at infinity) of $\mathbb{X}$: 
	\begin{align}\label{S2 Dimensions}
		\textstyle
		n\,=\,
		\ell+\sum_{\alpha \in \Sigma^{+}}\,m_{\alpha}
		\qquad\textnormal{and}\qquad
		\nu\,=\,\ell+2|\Sigma_{r}^{+}|
	\end{align}
	where $m_{\alpha}$ denotes the dimension of the positive root subspace
	\begin{align*}
		\mathfrak{g}_{\alpha}\,
		=\,\lbrace{
			X\in\mathfrak{g}\,|\,[H,X]=\langle{\alpha,H}\rangle{X},\,
			\forall\,H\in\mathfrak{a}
		}\rbrace.
	\end{align*}
	
	Let $\mathfrak{n}$ be the nilpotent Lie subalgebra 
	of $\mathfrak{g}$ associated with $\Sigma^{+}$ 
	and let $N = \exp \mathfrak{n}$ be the corresponding 
	Lie subgroup of $G$. We have the decompositions 
	\begin{align*}
		\begin{cases}
			\,G\,=\,N\,(\exp\mathfrak{a})\,K 
			\qquad&\textnormal{(Iwasawa)}, \\[5pt]
			\,G\,=\,K\,(\exp\overline{\mathfrak{a}^{+}})\,K
			\qquad&\textnormal{(Cartan)}.
		\end{cases}
	\end{align*}
	Denote by $A(x)\in\mathfrak{a}$ and $x^{+}\in\overline{\mathfrak{a}^{+}}$
	the middle components of $x\in{G}$ in these two decompositions, and by
	$|x|=|x^{+}|$ the distance to the origin.
	In the Cartan decomposition, the Haar measure 
	on $G$ writes
	\begin{align*}
		\int_{G}\diff{x}\,f(x)\,
		=\,
		|K/\mathbb{M}|\,\int_{K}\diff{k_1}\,
		\int_{\mathfrak{a}^{+}}\diff{x^{+}}\,\delta(x^{+})\, 
		\int_{K}\diff{k_2}\,f(k_{1}(\exp x^{+})k_{2})\,,
	\end{align*}
	with density
	\begin{align}\label{S2 estimate of delta}
		\delta(x^{+})\,
		=\,\prod_{\alpha\in\Sigma^{+}}\,
		(\sinh\langle{\alpha,x^{+}}\rangle)^{m_{\alpha}}\,
		\asymp\,
		\prod_{\alpha\in\Sigma^{+}}
		\Big( 
		\frac{\langle\alpha,x^{+}\rangle}
		{1+\langle\alpha,x^{+}\rangle}
		\Big)^{m_{\alpha}}\,
		e^{2\langle\rho,x^{+}\rangle}
		\qquad\forall\,x^{+}\in\overline{\mathfrak{a}^{+}}. 
	\end{align}
	Here $K$ is equipped with its normalized Haar measure, $\mathbb{M}$ denotes the centralizer of $\exp\mathfrak{a}$ in $K$ and the volume 
	of $K/\mathbb{M}$ can be computed explicitly, see \cite[Eq (2.2.4)]{AnJi1999}.
	Recall that $\rho\in\mathfrak{a}^{+}$ denotes the half sum of all positive roots 
	$\alpha \in \Sigma^{+}$ counted with their multiplicities $m_{\alpha}$:
	\begin{align*}
		\rho\,=\,
		\frac{1}{2}\,\sum_{\alpha\in\Sigma^{+}} \,m_{\alpha}\,\alpha.
	\end{align*}
	
	\subsection{Spherical Fourier analysis} For this subsection, our main references are  \cite[Chap.4]{GaVa1988} and 
	\cite[Chap.IV]{Hel2000}.
	
	For every $\lambda\in\mathfrak{a}$, the spherical function
	$\varphi_{\lambda}$ is a smooth bi-$K$-invariant eigenfunction of all 
	$G$-invariant differential operators on $\mathbb{X}$, in particular of the
	Laplace-Beltrami operator:
	\begin{equation*}
		-\Delta\varphi_{\lambda}(x)\,
		=\,(|\lambda|^{2}+|\rho|^2)\,\varphi_{\lambda}(x).
	\end{equation*}
	It is symmetric in the sense that
	$\varphi_{\lambda}(x^{-1})=\varphi_{-\lambda}(x)$,
	and is given by the integral representation
	\begin{align}\label{S2 Spherical Function}
		\varphi_{\lambda}(x)\, 
		=\,\int_{K}\diff{k}\,e^{\langle{i\lambda+\rho,\,A(kx)}\rangle}.
	\end{align}
	
	All the elementary spherical functions $\varphi_{\lambda}$ 
	with parameter $\lambda\in\mathfrak{a}$ are controlled by the ground spherical
	function $\varphi_{0}$, which satisfies the global estimate
	\begin{align}\label{S2 global estimate phi0}
		\varphi_{0}(\exp{H})\,
		\asymp\,
		\Big\lbrace \prod_{\alpha\in\Sigma_{r}^{+}} 
		1+\langle\alpha,H\rangle\Big\rbrace\,
		e^{-\langle\rho, H\rangle}
		\qquad\forall\,H\in\overline{\mathfrak{a}^{+}}.
	\end{align}
	We will also use the following three properties, \cite[Proposition 4.6.3]{GaVa1988}: first,
	\begin{equation}\label{phi0split}
		\int_{K}\diff{k}\,\varphi_{0}(xky)=\varphi_{0}(x)\varphi_{0}(y).
	\end{equation} 
	Second, if $\Omega \subseteq G$ is any compact set, there is a constant $c = c(\Omega)\geq 1$ such that
	\begin{equation}\label{phi0bdd}
		c^{-1} \varphi_{0}(x)\leq\varphi_{0}(y_1xy_2)\leq c\, \varphi_{0}(x)\quad \forall x\in G, \, \forall y_1,y_2\in \Omega.
	\end{equation} 
	Finally,  it holds
	\begin{align}\label{S4 spherical split}
		\varphi_{0}(g^{-1}y)\,
		=\,\int_{K}\diff{k}\,e^{\langle{\rho,A(kg})\rangle}\,
		e^{\langle{\rho,A(ky})\rangle}
	\end{align}
	(see for instance \cite[Chap.III, Theorem 1.1]{Hel1994})
	
	Let $\mathcal{S}(K \backslash{G}/K)$ be the Schwartz space of bi-$K$-invariant
	functions on $G$. The spherical Fourier transform (Harish-Chandra transform)
	$\mathcal{H}$ is defined by
	\begin{align}\label{S2 HC transform}
		\mathcal{H}f(\lambda)\,
		=\,\int_{G}\diff{x}\,\varphi_{-\lambda}(x)\,f(x) 
		\qquad\forall\,\lambda\in\mathfrak{a},\
		\forall\,f\in\mathcal{S}(K\backslash{G/K}),
	\end{align}
	where $\varphi_{\lambda}\in\mathcal{C}^{\infty}(K\backslash{G/K})$ is the
	spherical function of index $\lambda \in \mathfrak{a}$.
	Denote by $\mathcal{S}(\mathfrak{a})^{W}$ the subspace 
	of $W$-invariant functions in the Schwartz space $\mathcal{S}(\mathfrak{a})$. 
	Then $\mathcal{H}$ is an isomorphism between $\mathcal{S}(K\backslash{G/K})$ 
	and $\mathcal{S}(\mathfrak{a})^{W}$. The inverse spherical Fourier transform 
	is given by
	\begin{align}\label{S2 Inverse formula}
		f(x)\,
		=\,C_0\,\int_{\mathfrak{a}}\,\diff{\lambda}\,
		|\mathbf{c(\lambda)}|^{-2}\,
		\varphi_{\lambda}(x)\,
		\mathcal{H}f(\lambda) 
		\qquad\forall\,x\in{G},\
		\forall\,f\in\mathcal{S}(\mathfrak{a})^{W},
	\end{align}
	where the constant $C_0=2^{n-\ell}/(2\pi)^{\ell}|K/\mathbb{M}||W|$ depends only 
	on the geometry of $\mathbb{X}$, and $|\mathbf{c(\lambda)}|^{-2}$ is the 
	so-called Plancherel density, given by an explicit formula by Gindikin-Karpelevič.

	\subsection{Heat kernel on symmetric spaces}
	The heat kernel on $\mathbb{X}$ is a positive bi-$K$-invariant right 
	convolution kernel, i.e., $h_{t}(xK,yK)=h_{t}(y^{-1}x)>0$, 
	which is thus determined by its restriction 
	to the positive Weyl chamber. 
	According to the inversion formula of the spherical Fourier transform, 
	the heat kernel is given by
	\begin{align}\label{S2 heat kernel inv}
		h_{t}(xK)\,
		=\,C_{0}\,\int_{\mathfrak{a}}\,\diff{\lambda}\,
		|\mathbf{c(\lambda)}|^{-2}\,
		\varphi_{\lambda}(x)\,
		e^{-t(|\lambda|^{2}+|\rho|^{2})}
	\end{align}
	and satisfies the global estimate
	\begin{align}\label{S2 heat kernel}
		h_{t}(\exp{H})\,
		\asymp\,t^{-\frac{n}{2}}\,
		\Big\lbrace{
			\prod_{\alpha\in\Sigma_{r}^{+}}
			(1+t+\langle{\alpha,H}\rangle)^{\frac{m_{\alpha}+m_{2\alpha}}{2}-1}
		}\Big\rbrace\,\varphi_{0}(\exp{H})
		e^{-|\rho|^{2}t-\frac{|H|^{2}}{4t}}
	\end{align}
	for all $t>0$ and $H\in\overline{\mathfrak{a}^{+}}$, 
	see \cite{AnJi1999,AnOs2003}. Recall that $\int_{\mathbb{X}}\diff{\mu}\,h_{t}=1$.
	
	Moreover, as long as $|H|=O(t)$, the large time
	behavior of the heat kernel can be described more accurately by the following
	asymptotics \cite[Theorem 5.1.1]{AnJi1999}:
	\begin{align}\label{S2 heat kernel critical region}
		h_t(\exp{H})\,
		\sim\,C_{1}\,t^{-\frac{\nu}{2}}\,
		\mathbf{b}\big(-i\tfrac{H}{2t}\big)^{-1}\,
		\varphi_{0}(\exp{H})\,e^{-|\rho|^{2}t-\frac{|H|^{2}}{4t}}
	\end{align}
	as $t\rightarrow\infty$. Here, the constant $C_1$ is given explicitly in \cite[Theorem 5.1.1]{AnJi1999}, as well the expression of the function $\mathbf{b}(-\lambda)^{\pm1}$, which is holomorphic for 
	$\lambda\in\mathfrak{a}+i\overline{\mathfrak{a}^{+}}$ and for 
	$\lambda\in i\overline{\mathfrak{a}^{+}}$ it is positive. We recall that it has the following
	behavior
	\begin{align}\label{bfunction}
		|\mathbf{b}(-\lambda)|^{-1}\,
		\asymp\,
		\prod_{\alpha\in\Sigma_{r}^{+}}\,
		(1+|\langle{\alpha,\lambda}\rangle|)^{\frac{m_{\alpha}+m_{2\alpha}}{2}-1}
	\end{align}
	and that its derivatives can be estimated by
	\begin{align}\label{bfunction derivative}
		p(\tfrac{\partial}{\partial\lambda})
		\mathbf{b}(-\lambda)^{-1}\,
		=\,\mathrm{O}\big(|\mathbf{b}(-\lambda)|^{-1}\big),
	\end{align}
	where $p(\tfrac{\partial}{\partial\lambda})$ is any differential 
	polynomial, \cite[pp.1041-42]{AnJi1999}.

	The following lemma plays a key role in the proof of our results and generalizes \cite[Lemma 4.7]{APZ2023}, as it does not require staying away from the walls.
	\begin{lemma}\label{S4 Lemma ratios difference}
		Let $t\mapsto r(t)$ be a positive function such that 
		$r(t)\rightarrow \infty$ and $\frac{r(t)}{t}\rightarrow 0$ 
		as $t\rightarrow\infty$. Then,  for bounded $y\in{G}$ and for all $g$ such that $|g|\leq r(t)$, the following asymptotic behavior holds:
		\begin{align*}
			\frac{h_{t}(g^{-1}y)}{h_{t}(g^{-1})}
			-\frac{\varphi_{0}(g^{-1}y)}{\varphi_{0}(g^{-1})}\,
			=\,\mathrm{O}\Big(\frac{r(t)}{t}\Big)
			\qquad\textnormal{as}\,\,\,t\longrightarrow\infty.
		\end{align*}
	\end{lemma}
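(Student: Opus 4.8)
The plan is to reduce both arguments to the critical region and then divide the refined heat‑kernel asymptotics \eqref{S2 heat kernel critical region} by itself, checking that every resulting factor apart from the ratio of ground spherical functions differs from $1$ by $O(r(t)/t)$. Put $a:=(g^{-1})^{+}$ and $b:=(g^{-1}y)^{+}$ in $\overline{\mathfrak a^{+}}$; by bi‑$K$‑invariance $h_{t}(g^{-1})=h_{t}(\exp a)$, $h_{t}(g^{-1}y)=h_{t}(\exp b)$, and likewise for $\varphi_{0}$. Since the Riemannian distance is $1$‑Lipschitz and $y$ stays in a fixed compact set $\Omega$, one has $|a|=|g|\le r(t)$, $|b|=|g^{-1}y|\le|g|+|y|\le r(t)+C$ and $\bigl||b|-|a|\bigr|\le|y|\le C$ with $C=C(\Omega)$; hence $|a|/t,|b|/t\to0$, so $\exp a,\exp b$ sit well inside the region $|H|=O(t)$ where \eqref{S2 heat kernel critical region} applies. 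The single quantitative ingredient I would invoke is the uniform form of \cite[Theorem 5.1.1]{AnJi1999}: on the range $|H|\le r(t)+C$ the expansion \eqref{S2 heat kernel critical region} holds with a relative error $O(r(t)/t)$, uniformly and right up to the walls of $\mathfrak a^{+}$.

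Applying this to $h_{t}(\exp b)$ and $h_{t}(\exp a)$ and dividing, the factors $C_{1}t^{-\nu/2}$ and $e^{-|\rho|^{2}t}$ cancel and one is left with
\begin{equation*}
\frac{h_{t}(g^{-1}y)}{h_{t}(g^{-1})}
=\frac{\varphi_{0}(g^{-1}y)}{\varphi_{0}(g^{-1})}\,
\frac{\mathbf b\bigl(-i\tfrac{b}{2t}\bigr)^{-1}}{\mathbf b\bigl(-i\tfrac{a}{2t}\bigr)^{-1}}\,
e^{-\frac{|b|^{2}-|a|^{2}}{4t}}\,\bigl(1+O(\tfrac{r(t)}{t})\bigr),
\end{equation*}
and I would dispose of the three genuine factors in turn. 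First, by \eqref{phi0bdd} (with $x=g^{-1}$, $y_{1}=e$, $y_{2}=y$ and $\Omega$ the given compact set) the ratio $\varphi_{0}(g^{-1}y)/\varphi_{0}(g^{-1})$ is bounded above and below by positive constants. Second, since $\tfrac{a}{2t},\tfrac{b}{2t}\to0$ in $\overline{\mathfrak a^{+}}$, the two values $\mathbf b(-i\tfrac{a}{2t})^{-1},\mathbf b(-i\tfrac{b}{2t})^{-1}$ eventually lie near $\mathbf b(0)^{-1}>0$; using \eqref{bfunction} (so $|\mathbf b(-\lambda)|^{-1}\asymp1$ near $\lambda=0$) and the derivative bound \eqref{bfunction derivative} — legitimate up to the walls because $\mathbf b(-\lambda)^{-1}$ is holomorphic on all of $\mathfrak a+i\overline{\mathfrak a^{+}}$ — integration along the segment joining $i\tfrac{a}{2t}$ and $i\tfrac{b}{2t}$ (which stays in $i\overline{\mathfrak a^{+}}$ near $0$) gives $\bigl|\mathbf b(-i\tfrac{b}{2t})^{-1}-\mathbf b(-i\tfrac{a}{2t})^{-1}\bigr|\lesssim|b-a|/t\le(|a|+|b|)/t=O(r(t)/t)$, whence the $\mathbf b$‑ratio is $1+O(r(t)/t)$. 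Third, $\bigl||b|^{2}-|a|^{2}\bigr|=\bigl||b|-|a|\bigr|\,(|b|+|a|)\le C(2r(t)+C)=O(r(t))$, so the exponent $(|b|^{2}-|a|^{2})/4t\to0$ and $e^{-(|b|^{2}-|a|^{2})/4t}=1+O(r(t)/t)$.

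Multiplying, $h_{t}(g^{-1}y)/h_{t}(g^{-1})=\bigl(\varphi_{0}(g^{-1}y)/\varphi_{0}(g^{-1})\bigr)\bigl(1+O(r(t)/t)\bigr)$, and since the $\varphi_{0}$‑ratio is bounded this yields $h_{t}(g^{-1}y)/h_{t}(g^{-1})-\varphi_{0}(g^{-1}y)/\varphi_{0}(g^{-1})=O(r(t)/t)$, uniformly over $g$ with $|g|\le r(t)$ and over bounded $y$. The step I expect to be the real obstacle is the quantitative input invoked in the first paragraph: one has to check that the remainder in \cite[Theorem 5.1.1]{AnJi1999} is uniform and of the stated order all the way up to the walls of $\mathfrak a^{+}$, and, in tandem, that $\mathbf b(-\lambda)^{-1}$ together with its derivatives stays well behaved there. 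This is precisely the point that lets the present lemma drop the ``stay away from the walls'' hypothesis of \cite[Lemma 4.7]{APZ2023}, where the cone‑type estimates used in the corresponding step are available only in a Weyl chamber strictly interior to $\mathfrak a^{+}$; everything else above is routine.
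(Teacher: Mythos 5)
Your proof is correct and follows essentially the same route as the paper: apply the asymptotics \eqref{S2 heat kernel critical region} to both kernels, control the Gaussian factor via $\bigl||b|-|a|\bigr|\le|y|$, the $\mathbf b$-ratio via \eqref{bfunction}--\eqref{bfunction derivative} and the mean value theorem, and the $\varphi_0$-ratio via \eqref{phi0bdd}. The only real difference is that you make explicit the uniformity and the $O(r(t)/t)$ size of the remainder in \cite[Theorem 5.1.1]{AnJi1999}, a point the paper leaves implicit in the ``$\sim$'' of \eqref{difference}.
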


	\begin{proof}
		Assume that $|y|\le\xi$ for some positive constant $\xi$. Then, for $t$ large enough, we have
		\begin{align*}
			|(g^{-1}y)^{+}|\,&=\,d(g^{-1}yK, eK)\,=\,d(yK, gK)\,\le \, d(gK, eK)+d(yK, eK) \\
			&\le\,|g|+\xi\,
			<\,r(t)+r(t)\,	<\,2r(t).
		\end{align*}
		
		The asymptotics \eqref{S2 heat kernel critical region}  yield
		\begin{align}\label{difference}
			\frac{h_{t}(g^{-1}y)}{h_{t}(g^{-1})}
			-\frac{\varphi_{0}(g^{-1}y)}{\varphi_{0}(g^{-1})}\,
			\sim \, \frac{\varphi_{0}(g^{-1}y)}{\varphi_{0}(g^{-1})}
			\Big\lbrace{
				\frac{\mathbf{b}\left(-i\frac{(g^{-1}y)^{+}}{2t}\right)^{-1}} {\mathbf{b}\left(-i\frac{(g^{-1})^{+}}{2t}\right)^{-1}}e^{-\tfrac{|(g^{-1}y)^{+}|^{2}}{4t}+\tfrac{|(g^{-1})^{+}|^{2}}{4t}}-1
			}\Big\rbrace
		\end{align}
		On the one hand, the quotient $\frac{\varphi_{0}(g^{-1}y)}{\varphi_{0}(g^{-1})}$ is uniformly bounded  when $|g^{+}|\leq r(t)$ and $|y|\leq \xi$ due to \eqref{phi0bdd}. On the other hand, by the triangle inequality we get
		\begin{align}\label{Gaussian}
			e^{-\tfrac{|(g^{-1}y)^{+}|^{2}}{4t}+\tfrac{|(g^{-1})^{+}|^{2}}{4t}}\,
			=\,e^{\left(|(g^{-1})^{+}|-|(g^{-1}y)^{+}|\right)
				\tfrac{|(g^{-1})^{+}|+|(g^{-1}y)^{+}|}{4t}}\,
			=\,e^{\mathrm{O}\big(\tfrac{r(t)}{t}\big)}\,
			=\,1+\mathrm{O}\big(\tfrac{r(t)}{t}\big).
		\end{align}
		Finally, observe that for all $H\in \overline{\mathfrak{a}^{+}}$ such that $|H|=o(t)$, we have from \eqref{bfunction} and \eqref{bfunction derivative} that $$\mathbf{b}\big(-i\tfrac{H}{2t}\big)^{-1}\asymp 1, \quad 
		\left| \nabla_H
		\mathbf{b}\big(-i\tfrac{H}{2t}\big)^{-1}\right|\,
		=\,\mathrm{O}\left(\frac{1}{t}\right).$$
		Therefore, by the mean value theorem we get $$\left|\mathbf{b}\left(-i\frac{(g^{-1}y)^{+}}{2t}\right)^{-1} - \mathbf{b}\left(-i\frac{(g^{-1})^{+}}{2t}\right)^{-1}\right| \lesssim \frac{|y|}{t}= \mathrm{O}\left(\frac{1}{t}\right) $$
		where we used the fact that $d(xK,  yK) \geq |x^{+}-y^{+}|$ for all $x, y\in G$, \cite[Lemma 2.1.2]{AnJi1999}. In other words, 
		\begin{equation}\label{bquot}
			\frac{\mathbf{b}\left(-i\frac{(g^{-1}y)^{+}}{2t}\right)^{-1}} {\mathbf{b}\left(-i\frac{(g^{-1})^{+}}{2t}\right)^{-1}}=1+\mathrm{O}\left(\frac{1}{t}\right).
		\end{equation}
		In conclusion, \eqref{difference} yields by \eqref{Gaussian} and \eqref{bquot} that
		\begin{align*}
			\frac{h_{t}(g^{-1}y)}{h_{t}(g^{-1})}
			-\frac{\varphi_{0}(g^{-1}y)}{\varphi_{0}(g^{-1})}\,
			=\,\mathrm{O}\Big(\frac{r(t)}{t}\Big)
			\qquad\forall\,|g|\leq r(t),\,\,
			\forall\,|y|\leq \xi.
		\end{align*}
	\end{proof}

	\subsection{The Doob transform} Our main reference for this part is \cite[Chapter 9]{Gri2009}. Let $(\mathcal{M},\mu)$ be a Riemannian manifold. Any smooth positive function $h$ on $\mathcal{M}$ determines a
	new measure $\widetilde{\mu}$ on $\mathcal{M}$ by
	\begin{equation}\label{g9.20}
		\diff{\widetilde{\mu}}=h^2\diff{\mu}
	\end{equation}
	and, hence, a new weighted manifold $(\mathcal{M}, \widetilde{\mu}),$ whose heat kernel will be denoted by $\widetilde{h_t}$.   
	\begin{theorem} Let $h$ be a smooth positive function on $\mathcal{M}$ that satisfies
		the equation
		$\Delta_{\mu}h+ah=0$,
		where $a$ is a real constant. Then the following identities hold:
		\begin{equation}\label{g9.21}
			\Delta_{\widetilde{\mu}}=\frac{1}{h}\circ (\Delta_{\mu}+a\,\textnormal{Id}) \circ h,
		\end{equation}
		\begin{equation}\label{g9.22}
			e^{t\Delta_{\widetilde{\mu}}}=e^{at}\frac{1}{h}\circ e^{t\Delta}\circ h,\end{equation}
		\begin{equation}\label{g9.23}
			\widetilde{h_t}(x,y)=e^{at}\frac{h_t(x,y)}{h(x)h(y)}
		\end{equation}
		for all $t > 0$ and $x, y \in\mathcal{M} $.
	\end{theorem}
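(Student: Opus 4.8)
\emph{Step 1: the operator identity \eqref{g9.21}.} The plan is to establish the three displayed identities in sequence, each from the previous one. First I would recall that on the weighted manifold $(\mathcal{M},\widetilde{\mu})$ with $\diff\widetilde{\mu}=h^{2}\diff\mu$ the Laplacian acts on smooth functions by $\Delta_{\widetilde{\mu}}f=h^{-2}\operatorname{div}(h^{2}\nabla f)$, where $\nabla$ and $\operatorname{div}$ are taken with respect to the Riemannian metric. Expanding with $\operatorname{div}(\psi X)=\psi\operatorname{div}X+\langle\nabla\psi,X\rangle$ gives $\Delta_{\widetilde{\mu}}f=\Delta_{\mu}f+2h^{-1}\langle\nabla h,\nabla f\rangle$. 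On the other hand the product rule $\Delta_{\mu}(hf)=h\,\Delta_{\mu}f+2\langle\nabla h,\nabla f\rangle+f\,\Delta_{\mu}h$ together with the hypothesis $\Delta_{\mu}h=-ah$ gives $h^{-1}\big(\Delta_{\mu}(hf)+a\,hf\big)=\Delta_{\mu}f+2h^{-1}\langle\nabla h,\nabla f\rangle$. Comparing the two right-hand sides proves \eqref{g9.21} as an identity of operators on $C^{\infty}(\mathcal{M})$, in particular on $C_{0}^{\infty}(\mathcal{M})$.

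\emph{Step 2: the semigroup identity \eqref{g9.22}.} Next I would introduce the multiplication operator $Uf=hf$, which is a unitary map $L^{2}(\widetilde{\mu})\to L^{2}(\mu)$ since $\diff\widetilde{\mu}=h^{2}\diff\mu$, and which restricts to a bijection of $C_{0}^{\infty}(\mathcal{M})$ onto itself because $h$ is smooth and strictly positive. A short computation with Dirichlet forms — expand $\int_{\mathcal{M}}|\nabla(hf)|^{2}\diff\mu$, integrate by parts the cross term, and invoke $\Delta_{\mu}h=-ah$ once more — yields, for $f\in C_{0}^{\infty}(\mathcal{M})$,
\begin{equation*}
\int_{\mathcal{M}}|\nabla(hf)|^{2}\diff\mu-a\int_{\mathcal{M}}|hf|^{2}\diff\mu=\int_{\mathcal{M}}|\nabla f|^{2}\diff\widetilde{\mu}.
\end{equation*}
Hence $U$ intertwines the Dirichlet integral of $(\mathcal{M},\widetilde{\mu})$ with the quadratic form $f\mapsto\int_{\mathcal{M}}|\nabla f|^{2}\diff\mu-a\|f\|_{L^{2}(\mu)}^{2}$ on $L^{2}(\mu)$. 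Passing to the associated (Friedrichs) self-adjoint extensions, which is legitimate precisely because $U$ maps the common core $C_{0}^{\infty}(\mathcal{M})$ onto itself, I get $\Delta_{\widetilde{\mu}}=U^{-1}\circ(\Delta_{\mu}+a\,\mathrm{Id})\circ U$ as self-adjoint operators, and then the functional calculus applied to $z\mapsto e^{tz}$ gives $e^{t\Delta_{\widetilde{\mu}}}=U^{-1}e^{t(\Delta_{\mu}+a\,\mathrm{Id})}U=e^{at}\,h^{-1}\circ e^{t\Delta_{\mu}}\circ h$, which is \eqref{g9.22}.

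\emph{Step 3: the kernel identity \eqref{g9.23}, and the main obstacle.} Finally, for $f\in L^{2}(\widetilde{\mu})$ the defining property of the heat kernel of $(\mathcal{M},\widetilde{\mu})$ reads $e^{t\Delta_{\widetilde{\mu}}}f(x)=\int_{\mathcal{M}}\widetilde{h_{t}}(x,y)\,f(y)\,\diff\widetilde{\mu}(y)$, while \eqref{g9.22} together with $\diff\mu(y)=h(y)^{-2}\diff\widetilde{\mu}(y)$ gives
\begin{equation*}
e^{t\Delta_{\widetilde{\mu}}}f(x)=\frac{e^{at}}{h(x)}\int_{\mathcal{M}}h_{t}(x,y)\,h(y)\,f(y)\,\diff\mu(y)=\int_{\mathcal{M}}e^{at}\frac{h_{t}(x,y)}{h(x)h(y)}\,f(y)\,\diff\widetilde{\mu}(y);
\end{equation*}
comparing the two representations for all such $f$ and using that both heat kernels are continuous (indeed smooth) forces the kernels to agree pointwise, which is \eqref{g9.23}. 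The one genuinely delicate point is the domain issue in Step 2: the pointwise identity \eqref{g9.21} on its own says nothing about which self-adjoint realization of $\tfrac{1}{h}\circ(\Delta_{\mu}+a\,\mathrm{Id})\circ h$ one has in hand, and the hard part is to check that it is exactly the generator of the heat semigroup on $(\mathcal{M},\widetilde{\mu})$ and not some other extension. Working at the level of Dirichlet forms, where the unitary $U$ visibly preserves the canonical core $C_{0}^{\infty}(\mathcal{M})$, is what makes this match automatic; everything else is routine bookkeeping.
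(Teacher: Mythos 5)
Your proof is correct and complete. The paper itself gives no proof of this theorem (it is quoted from Grigor'yan, Chapter 9), and your argument — the pointwise computation for \eqref{g9.21}, conjugation of the Dirichlet forms by the unitary $f\mapsto hf$ on the common core $C_0^\infty(\mathcal{M})$ to identify the self-adjoint realizations and hence the semigroups via functional calculus, and then reading off the integral kernel — is precisely the standard one, including your correct identification of the choice of self-adjoint extension as the only delicate step.
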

	In \eqref{g9.22} and \eqref{g9.23}, $h$ and $\frac{1}{h}$ are regarded as multiplication operators,
	the domain of the operators in \eqref{g9.21} is $C^{\infty}(\mathcal{M})$, and the domain of the
	operators in \eqref{g9.23} is $L^2 (\mathcal{M}, \widetilde{\mu})$.
	The change of measure \eqref{g9.20} satisfying \eqref{g9.21} and the associated opeartor change
	\eqref{g9.22} are referred to as Doob's $h$-transform, which is a useful tool to describe Markov processes conditioned upon certain events.

	\begin{remark}
		In \cite{APZ2023} the long-time asymptotics of the distinguished Laplacian were studied.  Recall that the solvable Lie group $S = (\exp \mathfrak{a})N = N(\exp \mathfrak{a})$  can be identified, as a manifold, with $\mathbb{X}= G/K$, owing to the Iwasawa decomposition $G = SK$. Consider orthonormal bases $\{H_j\}$ of $\mathfrak{a}$ and $\{X_k\}$ of $\mathfrak{n}$, respectively, and the left-invariant vector
		fields agreeing with the respective elements of the tangent space at the identity. Then the distinguished Laplacian is the operator
		$\sum\limits_{j} H_j^2 + 2\sum\limits_{k} X_k^2$.
		
		However, it can be also seen as the Doob transform induced by another eigenfuntion, the modular function of $S$,  defined by
		\begin{align*}
			\widetilde{\delta}(g)\,
			=\,\widetilde{\delta}(n(\exp{A}))\,
			=\,e^{-2\langle{\rho,A}\rangle}
			\qquad\forall\,g\in{S}.
		\end{align*}
		Here $n=n(g)$ and $A=A(g)$ denotes respectively the $N$-component and the
		$\mathfrak{a}$-component of $g$ in the Iwasawa decomposition.
	\end{remark}
	
	\subsection{The infinite Brownian loop}
	Anker, Bougerol and Jeulin \cite{ABJ02} introduced the notion of the infinite Brownian loop, as a means to overcome the fact that the asymptotic properties of the Brownian motion for manifolds with spectral gap often fail to give information for their geometry at infinity.  We next define this process on a general Riemannian manifold $\mathcal{M}$.
	
	Fix a point $x_0\in \mathcal{M}$. The Brownian bridge $X^{(L)}$
	around a of length $L > 0$ is the Brownian motion $\{X_t, \; 0 \leq t \leq  L\}$ on $\mathcal{M}$
	conditioned by $X_0 = X_L = x_0$.
	\begin{definition}
		The infinite Brownian loop $(X_t^0)$ around $x_0$ is, when it exists, the limit in distribution of the Brownian bridge $B^{(L)}$ as $L \rightarrow +\infty$. 
	\end{definition}
	In fact, in \cite{ABJ02}  it is proved that on symmetric spaces of dimension at infinity $\nu$,  this process not only exists and has no spectral gap, but the behavior of the radial part of this process
	at infinity is the same as the one of the Brownian motion in a $\nu$-dimensional
	Euclidean space. In addition, the infinite Brownian loop in this
	case is the relativized $\varphi_0$-process. Notice that since the action of
	$G$ is transitive on $\mathbb{X}=G/K$, there is no restriction to study the infinity Brownian loop only around the origin $x_0=\{K\}$.
	
	The processes relativized by a ground state were introduced on general Riemannian manifolds by Sullivan in \cite{Sul79} and \cite{Sul87}. In general there are many
	positive ground states. The interesting feature of the infinite Brownian loop
	is that it chooses in a canonical way one of them, which is arguably in a
	sense the most symmetric one. According to Davies \cite{Dav97}, the idea of studying the heat kernel by using this relativized process
	goes back to Nelson \cite{Nel66} and Gross \cite{G75}, at
	least when there is a unique ground state. It plays a major role in Davies and
	Simon \cite{DS84} for instance.
	
	\section{The relativized Laplacian}\label{section3}
	By the discussion in Subsections 2.3 and 2.4, let us take in $\mathbb{X}$ as $h$ the ground spherical function $\varphi_0$ with $a=|\rho|^2$. 
	Notice that $\Delta_{\widetilde{\mu}}$
	is self-adjoint with respect to the measure $\varphi_0(x)^2\diff(xK)$ which has polynomial growth, due to \eqref{S2 estimate of delta} and \eqref{S2 global estimate phi0}:
	\begin{align*}\int_{G/K}\diff(xK)\,\varphi_0(x)^2 \,f(xK)&\asymp \int_{\mathfrak{a}^{+}}\diff{x^{+}}\,\Big\lbrace\prod_{\alpha\in\Sigma^{+}}
		\Big( 
		\frac{\langle\alpha,x^{+}\rangle}
		{1+\langle\alpha,x^{+}\rangle}
		\Big)^{m_{\alpha}}\Big\rbrace\,
		\Big\lbrace \prod_{\alpha\in\Sigma_{r}^{+}} 
		\left(1+\langle\alpha,x^{+}\rangle\right)\Big\rbrace^2 \\
		&\times \int_{K}\diff{k}\,f(k(\exp x^{+})K).
	\end{align*}
	Therefore, the corresponding heat kernel, 
	in the sense that 
	$$(e^{t\Delta_{\widetilde{\mu}}}f)(xK)=\int_{G/K}\diff{\widetilde{\mu}}({yK})\,\widetilde{{h_t}}(xK,yK)f(yK),$$
	is given by
	$$	\widetilde{{h_t}}(xK,yK)=e^{|\rho|^2t}\frac{h_t(xK,yK)}{\varphi_0(x)\varphi_0(y)}=e^{|\rho|^2t}\frac{h_t(y^{-1}x)}{\varphi_0(x)\varphi_0(y)},$$
	and it is bi-$K$-invariant in the sense that $\widetilde{{h_t}}(kxK,kyK)=\widetilde{{h_t}}(xK,yK)$, for all $x, y\in G$, $k\in K$.
	
	\begin{remark}
		Notice that $\widetilde{{h_t}}\diff{\widetilde{\mu}}$ is a probability measure
		on $\mathbb{X}=G/K$. Indeed, we have
		\begin{align*}
			\int_{G/K}\diff{\widetilde{\mu}}{(yK)}\,\widetilde{{h_t}}(xK, yK)&=e^{|\rho|^2t}\frac{1}{\varphi_{0}(x)}\int_{G}\diff{y}\,\varphi_0(y){h}_{t}(y^{-1}x)\\
			&=e^{|\rho|^2t}\frac{1}{\varphi_{0}(x)}\int_{G}\diff{z}\,\varphi_0(xz^{-1}){h}_{t}(z)\\
			&=e^{|\rho|^2t}\frac{1}{\varphi_{0}(x)}\int_{G}\diff{z}\,{h}_{t}(z)\int_{K}\diff{k}\,\varphi_0(xkz^{-1})\\
			&=e^{|\rho|^2t}\int_{G}\diff{z}\,{h}_{t}(z)\varphi_0(z), 
		\end{align*}
		by  \eqref{phi0split} and the fact that $\varphi_{0}(z^{-1})=\varphi_{0}(z)$. We conclude by observing that by \eqref{S2 heat kernel inv}, the last integral equals $(\mathcal{H}h_t)(0)=e^{-|\rho|^2t}$. 
	\end{remark}

	The first subsection is devoted to determine the critical region 
	where the heat kernel $\widetilde{h}_{t}$ concentrates. 
	In the next two subsections, we study respectively the $L^{1}$ 
	and the $L^{\infty}$ asymptotic convergence of solutions to \eqref{S1 HE S}
	with compactly supported initial data.
	We discuss the same questions for other initial data in the last subsection.
	
	\subsection{Asymptotic concentration of the relativized heat kernel}
	The following proposition determines where the heat kernel $\widetilde{{h_t}}$ 
	concentrates, in the sense that the integral of the heat kernel  outside this region tends to $0$, as time grows to infinity (see Figure 1).

	Define the function
	$\mu:\overline{\mathfrak{a}^{+}}\rightarrow\mathbb{R}^{+}$ by $\mu(H)=\min_{\alpha\in\Sigma^{+}}\langle{\alpha,H}\rangle$.
	
	\begin{proposition}
		Let $t\mapsto\varepsilon(t)$ be a positive function such that 
		$\varepsilon(t)\searrow0$ and $\varepsilon(t)\sqrt{t}\rightarrow\infty$ 
		as $t\rightarrow\infty$. 
		Then the heat kernel associated with the relativized Laplacian on $(\mathbb{X}, \widetilde{\mu})$
		concentrates asymptotically in $K(\exp\Omega_{t})$, where
		\begin{align*}
			\Omega_{t}\,
			=\,\big\lbrace{
				H\in\overline{\mathfrak{a}^{+}}\,|\,
				\varepsilon(t)\sqrt{t}\le|H|\le\tfrac{\sqrt{t}}{\varepsilon(t)}
				\,\,\,\textnormal{and}\,\,\,
				\mu(H)\ge\varepsilon(t)\sqrt{t}
			}\big\rbrace.
		\end{align*}
		In other words,
		\begin{align*}
			\lim_{t\rightarrow\infty}
			\int_{gK\,\textrm{s.t.}\,
				g^{+}\in\overline{\mathfrak{a}^{+}}
				\smallsetminus\Omega_{t}}
			d\widetilde{\mu}(gK)\,\widetilde{h_t}(gK)\,
			=\,0
		\end{align*}
		where $g^{+}$ denotes the middle component of $g$ in the Cartan decomposition.
	\end{proposition}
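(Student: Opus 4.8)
The plan is to split the complement $\overline{\mathfrak{a}^{+}}\smallsetminus\Omega_{t}$ into three pieces according to which defining inequality fails: the "near" region $\{|H|\le\varepsilon(t)\sqrt{t}\}$, the "far" region $\{|H|\ge\sqrt{t}/\varepsilon(t)\}$, and the "wall" region $\{\mu(H)<\varepsilon(t)\sqrt{t}\}$ (intersected with the band $\varepsilon(t)\sqrt t\le|H|\le\sqrt t/\varepsilon(t)$, so that the three pieces cover the complement). On each piece I will bound $\int d\widetilde{\mu}(gK)\,\widetilde{h_t}(gK)$ and show it tends to $0$. The basic tool throughout is the formula $d\widetilde{\mu}(gK)\,\widetilde{h_t}(gK)=e^{|\rho|^2 t}\,\varphi_0(g)\,h_t(g)\,d(gK)$ (using bi-$K$-invariance and $\widetilde{h_t}(gK)=e^{|\rho|^2t}h_t(g)/\varphi_0(g)$ since the second variable is the origin), so that after passing to polar coordinates the integrand becomes, up to constants and the $K$-integration,
\begin{align*}
  e^{|\rho|^2 t}\,\varphi_0(\exp H)\,h_t(\exp H)\,\delta(H).
\end{align*}
Now I insert the global heat kernel estimate \eqref{S2 heat kernel}, the estimate \eqref{S2 global estimate phi0} for $\varphi_0$, and \eqref{S2 estimate of delta} for $\delta$. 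The two exponential factors $e^{-\langle\rho,H\rangle}$ from $\varphi_0$ and $e^{2\langle\rho,H\rangle}$ from $\delta$ combine with $\varphi_0(\exp H)e^{-|\rho|^2 t}$ from the heat kernel and the leading $e^{|\rho|^2 t}$ to leave an integrand of the shape $t^{-n/2}\,\pi(H,t)\,e^{-|H|^2/4t}$, where $\pi(H,t)$ is a product of polynomial factors in the quantities $1+t+\langle\alpha,H\rangle$ (from the heat kernel) and $1+\langle\alpha,H\rangle$ (from $\varphi_0^2\delta$), with exponents governed by the root multiplicities.

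For the far region $|H|\ge\sqrt{t}/\varepsilon(t)$, the Gaussian $e^{-|H|^2/4t}\le e^{-1/(4\varepsilon(t)^2)}$ provides superpolynomial decay; after substituting $H=\sqrt t\,K$ (so $d H=t^{\ell/2}dK$) the remaining integral is dominated by a fixed convergent Gaussian integral times a polynomially-bounded prefactor, and the factor $e^{-c/\varepsilon(t)^2}$ kills everything — this is the easy piece, just as in \cite{APZ2023}. For the near region $|H|\le\varepsilon(t)\sqrt t$, one discards the Gaussian ($\le 1$), substitutes $H=\sqrt t\, K$ with $|K|\le\varepsilon(t)\to 0$, and checks that the polynomial prefactor, once rescaled, contributes a factor like $\int_{|K|\le\varepsilon(t)}(\cdots)\,dK$ whose volume shrinks; the delicate bookkeeping is to verify that the total power of $\sqrt t$ produced by $t^{-n/2}\cdot t^{\ell/2}$ (Jacobian) $\cdot$ (homogeneity degree of $\pi$) is exactly zero in the leading term, so that no spurious divergence appears and the vanishing volume wins. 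Here the identity $\nu=\ell+2|\Sigma_r^+|$ and the precise exponents $\frac{m_\alpha+m_{2\alpha}}{2}-1$ in \eqref{S2 heat kernel} (matched against the exponent $2$ on each $1+\langle\alpha,H\rangle$ from $\varphi_0^2$, but NOT from $\delta$ when $\alpha$ is divisible) must be tracked carefully.

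The main obstacle is the wall region $\mu(H)<\varepsilon(t)\sqrt t$ while $|H|\asymp\sqrt t$ in the sense of being in the band. Here some reduced root $\alpha$ satisfies $\langle\alpha,H\rangle=o(\sqrt t)$, so the corresponding factor $\langle\alpha,H\rangle/(1+\langle\alpha,H\rangle)$ in $\delta$ becomes small — this is precisely the gain that makes the integral over a neighborhood of the wall small, and it is the place where the present argument must go beyond the "away from the walls" estimates of \cite[Lemma 4.7]{APZ2023}. The plan is: fix a simple root $\beta$, restrict to $\{H:\langle\beta,H\rangle<\varepsilon(t)\sqrt t\}$, integrate out the $\beta$-direction first (picking up a factor $\lesssim\varepsilon(t)\sqrt t$ from the length of that interval, times the small value of $\delta$'s $\beta$-factor), and bound the remaining $(\ell-1)$-dimensional integral by extending it to all of $\overline{\mathfrak{a}^+}$ and using the far/near analysis above to see it is $O(1)$; summing over the finitely many simple roots $\beta\in\Sigma_s^+$ then gives a bound $O(\varepsilon(t)\cdot\text{const})\to 0$, or more precisely $O(\varepsilon(t)^{c})$ for a suitable positive power $c$ depending on multiplicities. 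One has to be slightly careful that $\mu$ is defined via all of $\Sigma^+$, not just reduced roots, but since $\langle\alpha,H\rangle$ and $\langle 2\alpha,H\rangle$ vanish together it suffices to control the reduced ones, and since $\mu(H)\ge\min_{\beta\in\Sigma_s^+}\langle\beta,H\rangle$ up to a constant (every positive root is a nonnegative combination of simple roots), reducing to simple roots is legitimate. Assembling the three estimates completes the proof.
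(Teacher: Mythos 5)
Your reduction to $e^{|\rho|^{2}t}\int\varphi_{0}(g)\,h_{t}(g)\,\diff{g}$, followed by inserting the global estimates \eqref{S2 heat kernel}, \eqref{S2 global estimate phi0}, \eqref{S2 estimate of delta} and splitting the complement of $\Omega_{t}$ into near, far and wall pieces in polar coordinates, is exactly the computation the paper relies on (it simply defers it to \cite[Proposition 4.2]{APZ2023}); your treatment of the near piece (exact cancellation of the powers of $t$ after $H=\sqrt{t}\,K$, then the shrinking volume gives $O(\varepsilon(t)^{\nu})$) and of the wall piece (reduction to simple roots, integrating out the $\beta$-direction, keeping the Gaussian for the transverse integral) is sound.

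One step does not hold up as written: in the far region the argument ``a polynomially-bounded prefactor times $e^{-c/\varepsilon(t)^{2}}$ tends to zero'' fails if the prefactor grows polynomially in $t$, because $\varepsilon(t)$ is only assumed to decrease to $0$ and may do so arbitrarily slowly (for $\varepsilon(t)=1/\log\log t$ one has $e^{-c/\varepsilon(t)^{2}}\,t^{C}\rightarrow\infty$ for every $C>0$). The fix is the same bookkeeping you already carry out in the near region: for $\sqrt{t}/\varepsilon(t)\le|H|\le t$ each factor $(1+t+\langle\alpha,H\rangle)^{\frac{m_{\alpha}+m_{2\alpha}}{2}-1}$ is $\asymp t^{\frac{m_{\alpha}+m_{2\alpha}}{2}-1}$, so after $H=\sqrt{t}\,K$ the powers of $t$ cancel exactly and the contribution is the tail $\int_{|K|\ge1/\varepsilon(t)}(1+|K|)^{2|\Sigma_{r}^{+}|}e^{-|K|^{2}/4}\,\diff{K}$ of a fixed convergent integral, which tends to $0$ without any need to extract $e^{-c/\varepsilon(t)^{2}}$; the remaining range $|H|\ge t$ is handled by $e^{-|H|^{2}/4t}\le e^{-|H|/4}$, which beats every polynomial factor. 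With this correction the proof goes through.
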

	
	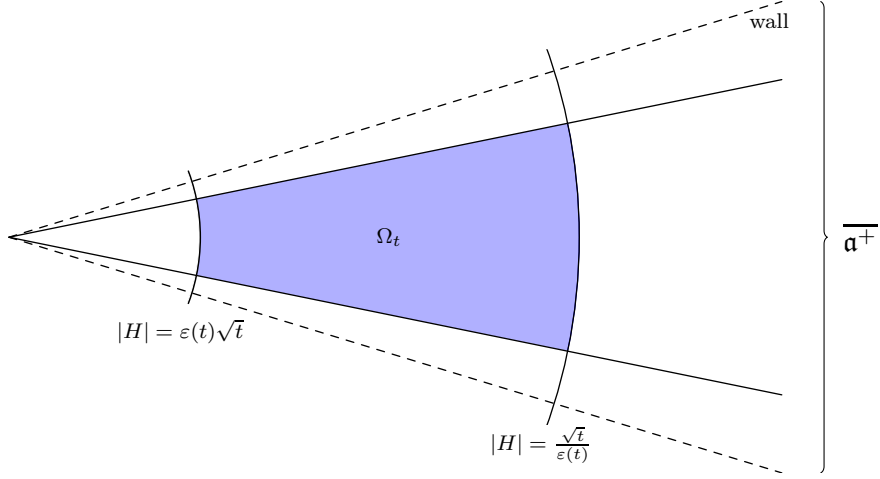
\begin{figure}
		\centering
		\begin{tikzpicture}[line cap=round,line join=round,>=triangle 45,x=1cm,y=1cm,scale=2.5]
\clip(-0.5,-1.25) rectangle (5,1.25);
\draw [line width=0.0pt,color=blue,fill=blue,fill opacity=0.31] (0,0) circle (3cm);
\fill[line width=0pt,color=white,fill=white,fill opacity=1] (0,0) -- (4.0654939275664805,0.8360139301518585) -- (0,4) -- (-4,1) -- (-4,-1) -- (0,-4) -- (4.183307817025808,-0.8586935566861592) -- cycle;
\draw [line width=1.5pt] (0,0) circle (1cm);
\draw [line width=0.5pt] (0,0) circle (3cm);   
\fill[line width=0pt,color=white,fill=white,fill opacity=1] (0,0) -- (4,1.5) -- (0,4) -- (-4,2) -- (-4,-2) -- (0,-4) -- (4,-1.5) -- cycle;
\draw [line width=0pt,color=white,fill=white,fill opacity=1] (0,0) circle (1cm);

\draw [line width=0.5pt] (0,0)-- (4.0654939275664805,0.8360139301518585);
\draw [line width=0.5pt] (0,0)-- (4.0654939275664805,-0.8360139301518585);
\draw [color=white,fill=white,fill opacity=1] (1.5,-1) rectangle (2,-1);
\draw [color=white,fill=white,fill opacity=1] (1.5,1) rectangle (2,1);
\draw [color=white,fill=white,fill opacity=1] (2.5,-1) rectangle (3,-1.5);
\draw [color=white,fill=white,fill opacity=1] (2.5,1) rectangle (3,1.5);
\draw [dashed, line width=0.5pt] (0,0)-- (4.0654939275664805,-1.25);
\draw [dashed, line width=0.5pt] (0,0)-- (4.0654939275664805,1.25);
\draw[decoration={brace,mirror,raise=5pt},decorate]
  (4.2,-1.25) -- node[right=6pt] {$\,\,\overline{\mathfrak{a}^{+}}$} (4.2,1.25);

\begin{scriptsize}
\draw (0.9,-0.5) node{$|H|=\varepsilon(t)\sqrt{t}$};
\draw (2.8,-1.1) node{$|H|=\tfrac{\sqrt{t}}{\varepsilon(t)}$};
\draw (2,0) node{$\Omega_{t}$};
\draw (4,1.15) node{wall};
\end{scriptsize}
\end{tikzpicture}
		\caption{$L^1$ critical region $\Omega_{t}$ 
			in the positive Weyl chamber.}
		\label{fig concentration2}
	\end{figure}
	
	\begin{proof}
		Observe that
		\begin{align*}
			I(t)\,
			=\,\int_{K(\exp\Omega_{t})} d\widetilde{\mu}(gK)\,\widetilde{{h}_{t}}(gK)\,
			=\,e^{|\rho|^{2}t}\int_{K(\exp\Omega_{t})K}\diff{g}\,\varphi_{0}(g)\,h_{t}(g).
		\end{align*}
		Then the  claim follows by resuming the computations in \cite[Proposition 4.2]{APZ2023}.
	\end{proof}

	\subsection{Heat asymptotics in $L^1$ for compactly supported initial data}
	In this subsection, we investigate the long-time asymptotic convergence in 
	$L^1(\widetilde{\mu})$ of solutions to the Cauchy problem \eqref{S1 HE S}
	where the initial data $f$ is assumed continuous and compactly supported, say $|y|\leq \xi$.  
	
	The mass function is defined by
	\begin{align}\label{massNEW}
		\widetilde{M}(g)\,
		=\,\frac{(\frac{f}{\varphi_0}*\varphi_{0})
		}{\varphi_{0}}(g)=\frac{1}{\varphi_{0}(g)}\int_{G}\diff{y}\,f(y)\varphi_{0}(y)\varphi_0(y^{-1}g)
		\qquad\forall\,g\in{G}.
	\end{align}

	\begin{remark}
		If $f\in\mathcal{C}_{c}(\mathbb{X})$,
		then the mass function $\widetilde{M}$ is bounded.
		This follows indeed from the local Harnack inequality in \eqref{phi0bdd}.
	\end{remark}
	
	\begin{remark}\label{rmk mass}
		If $f$ is bi-$K$-invariant,
		we notice  that
		\begin{align}\label{S4 radial mass}
			\frac{( \frac{f}{\varphi_0} *\varphi_{0})(g)}{\varphi_{0}(g)}\,
			&=\,\tfrac{1}{\varphi_{0}(g)}\,
			\int_{G}\diff{y}\,f(y)\varphi_{0}(y)\varphi_{0}(y^{-1}g)\notag\\[5pt]
			&=\,\tfrac{1}{\varphi_{0}(g)}\,
			\int_{G}\diff{y}\,f(y)\varphi_{0}(y)\,
			\underbrace{\vphantom{\Big|}
				\int_{K}\diff{k}\,\varphi_{0}(y^{-1}kg)
			}_{\varphi_{0}(g)\,\varphi_{0}(y)} \notag\\
			&=\,\int_{G}\diff{y}\,f(y)\,\varphi_{0}(y)^2=\int_{\mathbb{X}}\diff{\widetilde{\mu}}(yK)\,f(yK).
		\end{align}
		Hence the mass function $\widetilde{M}$ is a constant 
		if $f$ is bi-$K$-invariant and $f$ belongs to $L^{1}(\widetilde{\mu})$, thus generalizing the Euclidean mass.
	\end{remark}

	\vspace{5pt}
	Now, let us prove the first part of \cref{S1 Main thm 2}.
	\begin{proof}[Proof of \eqref{S1 L1 disting} in \cref{S1 Main thm 2}]
		First of all, the solution $u$ to \eqref{S1 HE S} writes as 
		\begin{align}
			u(t,gK)\,&=(e^{t\Delta_{\widetilde{\mu}}}f)(gK)=\int_{G/K}\diff{\mu}({yK})\,\varphi_0(y)^2\widetilde{{h_t}}(gK,yK)f(yK) \notag\\
			&=e^{|\rho|^2t}\frac{1}{\varphi_{0}(g)}\int_{G}\diff{y}\,\varphi_0(y){h_t}(y^{-1}g)f(yK)\notag\\
			&=\widetilde{h_t}(g)\int_{G}\diff{y}\,\varphi_0(y)\frac{h_t(y^{-1}g)}{h_t(g)}f(yK). \label{solNEW}
		\end{align}
		Then, by \eqref{massNEW} and \eqref{solNEW}, we have
		\begin{align}\label{S4 difference}
			u(t,g)-\widetilde{M}(g)\,\widetilde{h}_{t}(g)\,
			&=\,\widetilde{h}_{t}(g)\,
			\int_{G}\diff{y}\,\varphi_0(y)f(yK)\,
			\Big\lbrace{
				\frac{h_{t}(y^{-1}g)}{h_{t}(g)}
				-\frac{\varphi_{0}(y^{-1}g)}{\varphi_{0}(g)}
			}\Big\rbrace\notag\\[5pt]
			&=\,\widetilde{h}_{t}(g)\,
			\int_{G}\diff{y}\,\varphi_0(y)f(yK)\,
			\Big\lbrace{
				\frac{h_{t}(g^{-1}y)}{h_{t}(g^{-1})}
				-\frac{\varphi_{0}(g^{-1}y)}{\varphi_{0}(g^{-1})}
			}\Big\rbrace
		\end{align}
		where the last expression is derived from the symmetries 
		$h_{t}(x^{-1})=h_{t}(x)$ and $\varphi_{0}(x^{-1})=\varphi_{0}(x)$.
		Applying \cref{S4 Lemma ratios difference} for $r(t)=\frac{\sqrt{t}}{\varepsilon(t)}$, we have
		\begin{align*}
			\frac{h_{t}(g^{-1}y)}{h_{t}(g^{-1})}
			-\frac{\varphi_{0}(g^{-1}y)}{\varphi_{0}(g^{-1})}\,
			=\,\mathrm{O}\Big(\frac{1}{\varepsilon(t)\sqrt{t}}\Big)
			\qquad\forall\,g\in{K(\exp\Omega_{t})K},\,\,
			\forall\,y\in\supp{f,}
		\end{align*}
		and therefore the integral of
		$u(t,\cdot)-\widetilde{M}\,\widetilde{h}_{t}$ 
		over the critical region
		\begin{align*}
			\int_{\mathbb{X}\cap{K(\exp\Omega_{t})}} \hspace*{-.2cm}
			\diff{\widetilde{\mu}(gK)}\,    
			|u(t,gK)-\widetilde{M}(g)\widetilde{h}_{t}(gK)|\,
			\lesssim\,
			\frac{1}{\varepsilon(t)\sqrt{t}}\,
			\underbrace{\vphantom{\Big|}
				\int_{\mathbb{X}}\diff{\widetilde{\mu}(gK)}\,\widetilde{h}_{t}(gK)
			}_{1}\,
			\underbrace{\vphantom{\Big|}
				\int_{\supp f}\diff{\widetilde{\mu}(yK)}\,\left|\frac{f(yK)}{\varphi_0(y)}\right|
			}_{\const}
		\end{align*}
		tends asymptotically to $0$. It remains to check that outside the critical region, the integral
		\begin{align*}
			\int_{\mathbb{X}\smallsetminus{K(\exp\Omega_{t})}} \hspace*{-.2cm}
			\diff{\widetilde{\mu}(gK)}\,    
			|u(t,gK)-\widetilde{M}(g)\widetilde{h}_{t}(gK)|\,
			&\le\,
			\int_{\mathbb{X}\smallsetminus{K(\exp\Omega_{t})}} \hspace*{-.2cm}
			\diff{\widetilde{\mu}(gK)}\,
			|u(t,gK)|\\[5pt]
			&+\,
			\int_{\mathbb{X}\smallsetminus{K(\exp\Omega_{t})}} \hspace*{-.2cm}
			\diff{\widetilde{\mu}(gK)}\,    
			|\widetilde{M}(g)|\widetilde{h}_{t}(gK)
		\end{align*}
		tends also to $0$. On the one hand, we know that $\widetilde{M}$ is bounded
		and that the heat kernel $\widetilde{h}_{t}$ asymptotically concentrates in 
		$K(\exp\Omega_{t})K$, hence
		\begin{align*}
			\int_{\mathbb{X}\smallsetminus{K(\exp\Omega_{t})}} \hspace*{-.2cm}
			\diff{\widetilde{\mu}(gK)}\,
			|\widetilde{M}(g)|\widetilde{h}_{t}(gK)\,
			\longrightarrow\,0
		\end{align*}
		as $t\rightarrow\infty$. On the other hand, notice that for all
		$y\in\supp{f}$ and for all $g\in{G}$ such that
		$g^{+}\notin\Omega_{t}$, we have 
		\begin{align}\label{S4 Omega''}
			(y^{-1}g)^{+}\,\notin\,\Omega_{t}''
			=\,
			\big\lbrace{
				H\in\overline{\mathfrak{a}^{+}}\,|\,
				\varepsilon''(t)\sqrt{t}\le|H|\le\tfrac{\sqrt{t}}{\varepsilon''(t)}
				\,\,\,\textnormal{and}\,\,\,
				\mu(H)\ge\varepsilon''(t)\sqrt{t}
			}\big\rbrace
		\end{align}
		where $\varepsilon''(t)=2\varepsilon(t)$ (see for instance the proof of \cite[Lemma 4.7]{APZ2023}).
		Hence
		\begin{align*}
			\int_{\mathbb{X}\smallsetminus{K(\exp\Omega_{t})}} \hspace*{-.2cm}
			\diff{\widetilde{\mu}(gK)}\,
			|u(t,gK)|\,
			&\le\,\int_{\mathbb{X}}\diff{\widetilde{\mu}(yK)}\,|f(yK)|\,
			\int_{\mathbb{X}\smallsetminus{K(\exp\Omega_{t})}} \diff{\widetilde{\mu}(gK)}\, \widetilde{{h_t}}(gK, yK) 
			\\[5pt]
			&\lesssim\,
			\underbrace{\vphantom{
					\int_{\mathbb{X}\smallsetminus{K(\exp\Omega_{t}'')}}}
				\int_{\mathbb{X}}\diff{\widetilde{\mu}(yK)}\,|f(yK)|
			}_{\|f\|_{L^{1}(\widetilde{\mu})}}\,
			\underbrace{
				\int_{\mathbb{X}\smallsetminus{K(\exp\Omega_{t}'')}}
				\diff{\widetilde{\mu}(gK)}\,\widetilde{h}_{t}(gK)
			}_{\longrightarrow\,0}.\
		\end{align*}
		This concludes the proof of the heat asymptotics in $L^{1}(\widetilde{\mu})$ for the 
		relativized Laplacian $\Delta_{\widetilde{\mu}}$ on $\mathbb{X}$ and for initial data
		$f\in\mathcal{C}_{c}(\mathbb{X})$.
	\end{proof}

	\subsection{Heat asymptotics in $L^{\infty}$
		for compactly supported initial data}

	In the following two propositions we collect some elementary properties of the 
	relativized heat kernel. The first one clarifies the lower
	and the upper bounds of $\widetilde{h}_{t}$, while the second one describes
	its critical region for the $L^{\infty}$ norm (see Figure 2).

	\begin{proposition}\label{S4 htilde estimate proposition}
		The heat kernel $\widetilde{h}_{t}$ associated with the relativized Laplacian
		satisfies
		\begin{align}\label{S4 htilde estimate}
			\|\widetilde{h}_{t}\|_{L^{\infty}(\widetilde{\mu})}\,
			\asymp\,t^{-\frac{\nu+n}{4}}
		\end{align}
		for $t$ large enough.
	\end{proposition}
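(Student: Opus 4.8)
The plan is to read off $\|\widetilde{h}_{t}\|_{L^{\infty}(\widetilde{\mu})}$ from the Doob-transform formula for the relativized heat kernel together with the sharp bound \eqref{S2 heat kernel}, which turns the problem into an elementary maximization over the Weyl chamber. Since the density $\varphi_{0}^{2}$ is smooth and strictly positive, $L^{\infty}(\widetilde{\mu})$ and $L^{\infty}(\mu)$ carry the same norm, so $\|\widetilde{h}_{t}\|_{L^{\infty}(\widetilde{\mu})}$ is simply the supremum of the function $gK\mapsto\widetilde{h}_{t}(gK)=\widetilde{h}_{t}(eK,gK)=e^{|\rho|^{2}t}h_{t}(g)/\varphi_{0}(g)$. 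This function is bi-$K$-invariant, hence attains its supremum on $\exp\overline{\mathfrak{a}^{+}}$, so that
\[
\|\widetilde{h}_{t}\|_{L^{\infty}(\widetilde{\mu})}=\sup_{H\in\overline{\mathfrak{a}^{+}}}e^{|\rho|^{2}t}\,\frac{h_{t}(\exp H)}{\varphi_{0}(\exp H)} .
\]

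Next I would insert \eqref{S2 heat kernel}. The factor $\varphi_{0}(\exp H)$ cancels the denominator and $e^{-|\rho|^{2}t}$ cancels the prefactor, leaving, for $t\ge 1$ and $H\in\overline{\mathfrak{a}^{+}}$,
\[
\widetilde{h}_{t}(\exp H)\ \asymp\ t^{-n/2}\Big\{\prod_{\alpha\in\Sigma_{r}^{+}}\big(1+t+\langle\alpha,H\rangle\big)^{\frac{m_{\alpha}+m_{2\alpha}}{2}-1}\Big\}\,e^{-|H|^{2}/4t}.
\]
Thus the proposition reduces to finding the order of magnitude of the maximum over the cone $\overline{\mathfrak{a}^{+}}$ of the product of this polynomial weight in the variables $\langle\alpha,H\rangle$ and the Gaussian $e^{-|H|^{2}/4t}$.

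I would solve this extremal problem by exhibiting a near-maximizer and a matching upper bound. For the lower bound, fix a direction $H_{0}$ in the open chamber $\mathfrak{a}^{+}$ that stays uniformly away from all walls, and test the expression at $H=s(t)\,H_{0}$ with $s(t)$ of the order dictated by the critical-point equation of the associated one-variable function; along this ray every $\langle\alpha,s(t)H_{0}\rangle$ is of order $s(t)$, so the product collapses to a single power $(1+t+c\,s(t))^{A}$ with $A=\sum_{\alpha\in\Sigma_{r}^{+}}\big(\tfrac{m_{\alpha}+m_{2\alpha}}{2}-1\big)$, and the one-variable maximization of $s\mapsto(1+t+cs)^{A}e^{-s^{2}/4t}$ yields the asserted lower bound $t^{-(\nu+n)/4}$ once one uses $n=\ell+\sum_{\alpha\in\Sigma^{+}}m_{\alpha}$ and $\nu=\ell+2|\Sigma_{r}^{+}|$ to relate $A$, the scale $s(t)$ and the exponent $-n/2$ to $\nu$ and $n$. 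For the upper bound one estimates each polynomial factor by $\langle\alpha,H\rangle\le|\alpha|\,|H|$ (separating the reduced roots with positive exponent from those with nonpositive exponent), which bounds $\widetilde{h}_{t}(\exp H)$ by a one-variable quantity of the same shape in $r=|H|$; maximizing that over $r\ge0$—the optimum sits at $r\asymp\sqrt t$ up to a bounded factor—gives $\|\widetilde{h}_{t}\|_{L^{\infty}(\widetilde{\mu})}\lesssim t^{-(\nu+n)/4}$, and combining the two estimates gives $\asymp$.

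The heart of the argument is this extremal analysis: pinning down the correct scale of the maximizer, checking that the test direction may be chosen away from the walls so that $\langle\alpha,H\rangle\asymp|H|$ there (which is exactly why one cannot just evaluate at $H=0$), and carrying out the exponent bookkeeping that makes the polynomial gain, the factor $t^{-n/2}$ and the Gaussian combine to exactly $t^{-(\nu+n)/4}$. The remaining steps—reducing to the Weyl chamber, cancelling the $\varphi_{0}$-factors, and passing from two one-sided bounds to $\asymp$—are routine.
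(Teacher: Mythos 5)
Your reduction of the problem to the extremal question
$\sup_{H\in\overline{\mathfrak{a}^{+}}}\,t^{-n/2}\prod_{\alpha\in\Sigma_{r}^{+}}(1+t+\langle\alpha,H\rangle)^{a_{\alpha}}e^{-|H|^{2}/4t}$,
with $a_{\alpha}=\tfrac{m_{\alpha}+m_{2\alpha}}{2}-1$, is correct and is exactly the paper's starting point. The gap is in the extremal analysis itself, which you call ``the heart of the argument'' but only assert. Because of the $+t$ inside each factor, $1+t+\langle\alpha,H\rangle\asymp t$ for \emph{every} $H$ with $|H|\lesssim t$ --- in particular throughout the region $|H|\lesssim\sqrt{t}$ where the Gaussian is non-negligible --- so the polynomial weight contributes $t^{\sum a_{\alpha}}=t^{(n-\nu)/2}$, not $t^{(n-\nu)/4}$. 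Concretely, with $A=\sum_{\alpha}a_{\alpha}=\tfrac{n-\nu}{2}$, the maximum of your one-variable function $s\mapsto(1+t+cs)^{A}e^{-s^{2}/4t}$ is $\asymp t^{A}$, not $t^{A/2}$: it already equals $(1+t)^{A}\asymp t^{A}$ at $s=0$, and for $A>0$ one has $(1+t+cs)^{A}e^{-s^{2}/4t}\le(1+t)^{A}e^{cAs/t-s^{2}/4t}\lesssim t^{A}$ for all $s$. Hence the extremal problem you set up has answer $t^{-n/2+A}=t^{-\nu/2}$, the location of the maximizer ($r=0$ versus $r\asymp\sqrt{t}$) being irrelevant since the weight does not distinguish these scales; no bookkeeping with $n$ and $\nu$ turns $t^{-\nu/2}$ into $t^{-(\nu+n)/4}$, the two agreeing only when $\nu=n$.

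This is not merely a defect of your write-up: the two-sided bound \eqref{S4 htilde estimate} fails whenever $\nu\ne n$, and the paper's own proof contains the same slip at the step $(1+t+\langle\alpha,g^{+}\rangle)^{a_{\alpha}}\asymp t^{a_{\alpha}/2}\big(1+\langle\alpha,g^{+}\rangle/\sqrt{t}\big)^{a_{\alpha}}$, i.e.\ $1+t+s\asymp\sqrt{t}+s$, which is false for $s\ll t$ (take $s=0$). A quick sanity check: $\widetilde{h}_{t}(eK)=e^{|\rho|^{2}t}h_{t}(eK)\asymp t^{-\nu/2}$ is the classical on-diagonal decay (this is precisely how the pseudo-dimension is calibrated), so $\|\widetilde{h}_{t}\|_{L^{\infty}(\widetilde{\mu})}\gtrsim t^{-\nu/2}$, which exceeds $t^{-(\nu+n)/4}$ whenever $\nu<n$ (e.g.\ $\mathbb{H}^{4}$: $t^{-3/2}$ versus $t^{-7/4}$); conversely on $\mathbb{H}^{2}$ ($\nu=3>n=2$) the explicit bound $\widetilde{h}_{t}(r)\asymp t^{-1}(1+t+r)^{-1/2}e^{-r^{2}/4t}\le Ct^{-3/2}$ shows the claimed lower bound $t^{-5/4}$ is unattainable. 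The correct statement is $\|\widetilde{h}_{t}\|_{L^{\infty}(\widetilde{\mu})}\asymp t^{-\nu/2}$, consistent with the heuristic that the infinite Brownian loop behaves like Brownian motion on $\mathbb{R}^{\nu}$; your scheme does prove this once the maximization is done correctly (test at $H=0$ or $H=\sqrt{t}\rho$ for the lower bound), but the exponent $(\nu+n)/4$ --- which also normalizes \eqref{S1 Linf disting} --- would then have to be replaced by $\nu/2$ throughout.
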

	
	\begin{proof}
		Using the global estimate
		\eqref{S2 heat kernel}, we have for $t$ large
		\begin{align}
			\widetilde{h}_{t}(g)\,
			&\asymp\,
			t^{-\frac{n}{2}}\,
			\Big\lbrace{
				\prod\nolimits_{\alpha\in\Sigma_{r}^{+}}
				\big(1+t+\langle{\alpha,g^{+}}\rangle\big)^{
					\tfrac{m_{\alpha}+m_{2\alpha}}{2}-1}
			}\Big\rbrace\,
			e^{-\frac{|g^{+}|^2}{4t}} \notag \\
			&\asymp\,
			t^{-\frac{\nu+n}{4}}\,
			\Big\lbrace{  	\prod\nolimits_{\alpha\in\Sigma_{r}^{+}}
				\big(1+\frac{\langle{\alpha,g^{+}}\rangle}{\sqrt{t}}\big)^{
					\tfrac{m_{\alpha}+m_{2\alpha}}{2}-1}
				\Big\rbrace}\,
			e^{-\frac{|g^{+}|^2}{4t}} \label{S4 htilde1}
		\end{align}
		The lower bound in \eqref{S4 htilde estimate}
		is taken 	by evaluating the right hand side of \eqref{S4 htilde1} at
		$g_{0}=\exp(\sqrt{t}\rho)$.
		For the upper bound, 
		we get from \eqref{S4 htilde1} that
		\begin{align}\label{S4 htilde1''}
			\widetilde{h}_{t}(g)\,
			\lesssim\,t^{-\frac{\nu+n}{4}}\,
			\underbrace{
				e^{-\frac{1}{4}\big|\tfrac{g^{+}}{\sqrt{t}}\big|^{2}}
				\prod\nolimits_{\alpha\in\Sigma_{r}^{+}}
				\big(1+\langle{\alpha,\tfrac{g^{+}}{\sqrt{t}}}\rangle\big)^{
					\tfrac{m_{\alpha}+m_{2\alpha}}{2}-1}
			}_{=\,\textrm{O}(1)}\,
			\lesssim\,t^{-\frac{\nu+n}{4}}.
		\end{align}
	\end{proof}
	
	\begin{figure}
		\centering
		\begin{tikzpicture}[line cap=round,line join=round,>=triangle 45,x=1cm,y=1cm,scale=2.5]
[line cap=round,line join=round,>=triangle 45,x=1cm,y=1cm,scale=2.5]
\clip(-0.5,-1.25) rectangle (5,1.25);
\draw [line width=0.0pt,color=blue,fill=blue,fill opacity=0.31] (0,0) circle (3cm);

\draw [line width=0.5pt] (0,0) circle (3cm);   
\fill[line width=0pt,color=white,fill=white,fill opacity=1] (0,0) -- (4.9,1.5) -- (0,4.9) -- (-4.9,2) -- (-4.9,-2) -- (0,-4.9) -- (4.9,-1.5) -- cycle;

	\draw [dashed, line width=0.5pt] (0,0)-- (4.0654939275664805,-1.25);
	\draw [dashed, line width=0.5pt] (0,0)-- (4.0654939275664805,1.25);
	\draw[decoration={brace,mirror,raise=5pt},decorate]
	(4.2,-1.25) -- node[right=6pt] {$\,\,\overline{\mathfrak{a}^{+}}$} (4.2,1.25);

	\begin{scriptsize}
	
		\draw (2.8,-1.1) node{$|H|=\tfrac{\sqrt{t}}{\varepsilon(t)}$};
		\draw (2,0) node{$R_{t}$};
		\draw (4,1.15) node{wall};
	\end{scriptsize}
\end{tikzpicture}
		\caption{$L^{\infty}$ critical region $R_{t}$ in the positive Weyl chamber.}
		\label{fig concentration3}
	\end{figure}
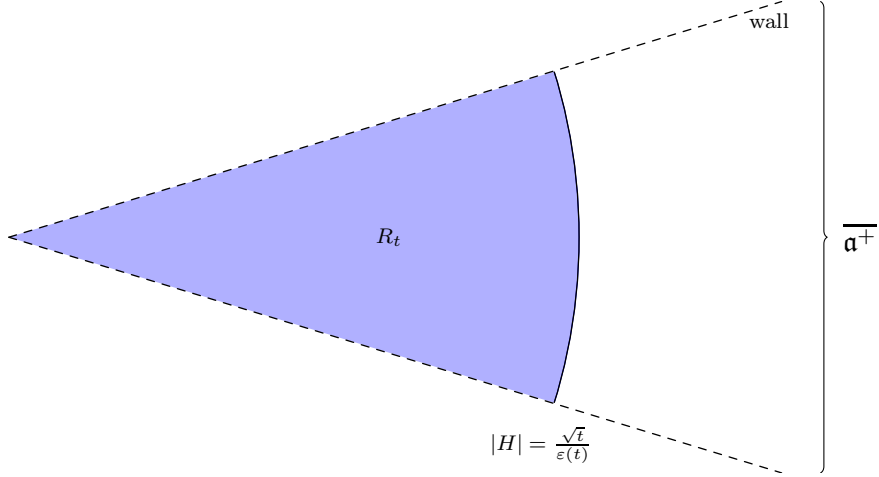
	
	Unlike the heat kernel of the distinguished Laplacian, the $L^1$ and $L^{\infty}$ critical regions do not coincide.

	\begin{proposition} \label{Linftyconc}
		The heat kernel $\widetilde{h}_{t}$ concentrates asymptotically
		in the critical region 
		$$K\exp(R_t), \quad \textnormal{where } R_t=\left\{H\in \overline{\mathfrak{a}^{+}}: \; |H|\leq \frac{\sqrt{t}}{\varepsilon(t)} \right\}.$$
		for the $L^{\infty}$ norm.
		In other words,
		\begin{align*}
			t^{\frac{\nu+n}{4}}\,
			\|\widetilde{h}_{t}\|_{L^{\infty}
				(\mathbb{X}\smallsetminus{K(\exp R_{t})})}\,
			\longrightarrow\,0
			\qquad\textnormal{as}\,\,\,t\rightarrow\infty.
		\end{align*}
	\end{proposition}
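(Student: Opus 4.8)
The plan is to read the desired decay directly off the two‑sided estimate \eqref{S4 htilde1} for $\widetilde{h}_{t}$. Since $\widetilde{h}_{t}$ is bi‑$K$‑invariant, proving the proposition amounts to showing that
$$\sup_{\substack{H\in\overline{\mathfrak{a}^{+}}\\ |H|>\sqrt{t}/\varepsilon(t)}} t^{\frac{\nu+n}{4}}\,\widetilde{h}_{t}(\exp H)\,\longrightarrow\,0
\qquad\textnormal{as}\,\,\,t\rightarrow\infty.$$
By \eqref{S4 htilde1}, for $t$ large and every $H\in\overline{\mathfrak{a}^{+}}$,
$$t^{\frac{\nu+n}{4}}\,\widetilde{h}_{t}(\exp H)\,\lesssim\,
\Big\lbrace\prod_{\alpha\in\Sigma_{r}^{+}}\Big(1+\tfrac{\langle\alpha,H\rangle}{\sqrt{t}}\Big)^{\frac{m_{\alpha}+m_{2\alpha}}{2}-1}\Big\rbrace\,
e^{-\frac{|H|^{2}}{4t}},$$
so the power of $t$ cancels and one is left with exactly the scale‑invariant factor already isolated as the $\mathrm{O}(1)$ term in \eqref{S4 htilde1''}. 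The mechanism is that, away from $R_{t}$, the ratio $s:=|H|/\sqrt{t}$ exceeds $1/\varepsilon(t)\to\infty$, and a Gaussian beats any polynomial.

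To make this precise I would bound the product crudely using $0\le\langle\alpha,H\rangle\le|\alpha|\,|H|$: the factors whose exponent $\tfrac{m_{\alpha}+m_{2\alpha}}{2}-1$ is negative are $\le 1$, while each of the others is $\le(1+|\alpha|\,s)^{\frac{m_{\alpha}+m_{2\alpha}}{2}-1}$. Hence the product is $\le P(s)$ for a fixed polynomial $P$ depending only on the root system of $\mathbb{X}$. Splitting $e^{-s^{2}/4}=e^{-s^{2}/8}\,e^{-s^{2}/8}$ and using $\sup_{s\ge0}P(s)\,e^{-s^{2}/8}<\infty$, we obtain
$$t^{\frac{\nu+n}{4}}\,\widetilde{h}_{t}(\exp H)\,\lesssim\,P(s)\,e^{-\frac{s^{2}}{4}}\,\lesssim\,e^{-\frac{s^{2}}{8}}\,=\,e^{-\frac{|H|^{2}}{8t}}.$$
For $|H|>\sqrt{t}/\varepsilon(t)$ one has $|H|^{2}/t>1/\varepsilon(t)^{2}$, so the last expression is $\lesssim e^{-1/(8\varepsilon(t)^{2})}$, and taking the supremum over all such $H$ yields
$$t^{\frac{\nu+n}{4}}\,\|\widetilde{h}_{t}\|_{L^{\infty}(\mathbb{X}\smallsetminus K(\exp R_{t}))}\,\lesssim\,e^{-\frac{1}{8\varepsilon(t)^{2}}}\,\longrightarrow\,0
\qquad\textnormal{as}\,\,\,t\rightarrow\infty,$$
since $\varepsilon(t)\searrow 0$. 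This is the assertion.

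I do not expect a genuine obstacle here: the entire content is that the Gaussian factor of \eqref{S4 htilde1} provides a super‑polynomial gain in $s=|H|/\sqrt{t}$, and $s\ge 1/\varepsilon(t)\to\infty$ outside $R_{t}$. The only bookkeeping is that the polynomial correction in the heat‑kernel estimate may carry negative exponents near the walls, which is harmless because those factors are $\le 1$; and it is worth emphasizing that, in contrast with the $L^{1}$ region $\Omega_{t}$, the $L^{\infty}$ region $R_{t}$ needs neither an inner radius $\varepsilon(t)\sqrt{t}$ nor the wall condition $\mu(H)\ge\varepsilon(t)\sqrt{t}$, precisely because the bound above is uniform in the direction of $H$.
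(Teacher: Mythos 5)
Your argument is correct and is essentially the paper's own proof: both read the claim off the two-sided estimate \eqref{S4 htilde1}, cancel the $t^{-(\nu+n)/4}$ factor, and use that the Gaussian in $s=|g^{+}|/\sqrt{t}$ dominates the polynomial correction once $s>1/\varepsilon(t)\to\infty$. Your version merely makes the paper's ``$\lesssim\varepsilon(t)^{N}$'' bound explicit as $e^{-1/(8\varepsilon(t)^{2})}$, which is a harmless quantitative refinement.
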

	
	\begin{proof}
		Let us study the sup norm of $\widetilde{h}_{t}$ outside the critical region. 
		
		If $|g^{+}|>\tfrac{\sqrt{t}}{\varepsilon(t)}$, then we deduce from \eqref{S4 htilde1} that 
		for any $N>0$,
		\begin{align*}
			t^{\frac{\nu+n}{4}}\,
			\widetilde{h}_{t}(g)\,
			\lesssim\,
			e^{-\frac{1}{4}\big|\tfrac{g^{+}}{\sqrt{t}}\big|^{2}}
			\prod\nolimits_{\alpha\in\Sigma_{r}^{+}}
			\big(1+\langle{\alpha,\tfrac{g^{+}}{\sqrt{t}}}\rangle\big)^{
				\tfrac{m_{\alpha}+m_{2\alpha}}{2}-1}\,
			\lesssim\,\varepsilon(t)^{N},
		\end{align*}
		which tends to $0$.
		Last, observe that for all $g$ such that $|g^{+}|=O(\sqrt{t})$, we have 
		$t^{\frac{\nu+n}{4}}\widetilde{h_t}(g)\gtrsim 1.$
	\end{proof}
	
	\vspace{5pt}
	Finally, let us prove the remaining part of \cref{S1 Main thm 2}.
	\begin{proof}[Proof of \eqref{S1 Linf disting} in \cref{S1 Main thm 2}]
		As before, let us write 
		\begin{align}
			|u(t,g)-\widetilde{M}(g)\widetilde{h}_{t}(g)|\,\leq \widetilde{h}_{t}(g)\,
			\int_{G}\diff{y}\,\varphi_0(y)|f(yK)|\,
			\left|{
				\frac{h_{t}(g^{-1}y)}{h_{t}(g^{-1})}
				-\frac{\varphi_{0}(g^{-1}y)}{\varphi_{0}(g^{-1})}
			}\right|
		\end{align}
		In the critical region $K(\exp R_{t})$, we have, for $\supp f  \subseteq B(eK, \xi)$,
		\begin{align*}
			|u(t,g)-\widetilde{M}(g)\widetilde{h}_{t}(g)|\,
			\le\,\widetilde{h}_{t}(g)\,
			\int_{|y|\leq \, \xi}\diff{y}\,\,\varphi_0(y)\,|f(yK)|\,
			\Big|{
				\frac{h_{t}(g^{-1}y)}{h_{t}(g^{-1})}
				-\frac{\varphi_{0}(g^{-1}y)}{\varphi_{0}(g^{-1})}
			}\Big|
		\end{align*}
		with
		\begin{align*}
			\Big|{
				\frac{h_{t}(g^{-1}y)}{h_{t}(g^{-1})}
				-\frac{\varphi_{0}(g^{-1}y)}{\varphi_{0}(g^{-1})}
			}\Big|\,
			\lesssim\,\frac{1}{\varepsilon(t)\sqrt{t}}
		\end{align*}
		according to \eqref{S4 difference} 
		and to \cref{S4 Lemma ratios difference} for $r(t)=\frac{\sqrt{t}}{\varepsilon(t)}$.
		Then we deduce from \eqref{S4 htilde estimate} that
		\begin{align*}
			t^{\frac{\nu+n}{4}}\,
			|u(t,g)-\widetilde{M}(g)\widetilde{h}_{t}(g)|\,
			\lesssim\,\tfrac{1}{\varepsilon(t)\sqrt{t}}
			\qquad\forall\,g\in K(\exp R_{t})K
		\end{align*}
		where the right-hand side tends to $0$ as $t\rightarrow\infty$.
		Outside the critical region, we estimate separately $u(t,g)$
		and $\widetilde{M}(g)\widetilde{h}_{t}(g)$. On the one hand, we know that
		$\widetilde{M}(g)$ is a bounded function and that 
		$\widetilde{h}_{t}(g)=\mathrm{o}(t^{-({\nu+n})/4})$. Then 
		$t^{(\nu+n)/4}\widetilde{M}(g)\widetilde{h}_{t}(g)$ 
		tends to $0$ as $t\rightarrow\infty$.
		On the other hand, since $|g^{+}|>\frac{\sqrt{t}}{\varepsilon(t)}$ and 
		$|y|\leq\xi$ imply that $|(g^{-1}y)^{+}|>\frac{\sqrt{t}}{\varepsilon(t)}-\xi>\frac{1}{2}\frac{\sqrt{t}}{\varepsilon(t)}$, we obtain
		\begin{align*}
			|u(t,g)|\,
			\lesssim\,\int_{G}\diff{\widetilde{\mu}}(yK)\,
			|f(yK)|\,|\widetilde{h}_{t}(gK, yK)|\,
		\end{align*}
		which is $\textrm{o}(t^{-(\nu+n)/4})$ (see Case 1 in \cref{Linftyconc}).
		In conclusion,
		\begin{align*}
			t^{\frac{\nu+n}{4}}
			\|u(t,\,\cdot\,)-
			\widetilde{M}\,\widetilde{h}_{t}\|_{L^{\infty}(\widetilde{\mu})}\,
			\longrightarrow\,0
		\end{align*}
		as $t\rightarrow\infty$.
	\end{proof}

	By convexity we obtain easily the corresponding result for the $L^{p}$ norm.
	\begin{corollary}
		The solution $u$ to the Cauchy problem \eqref{S1 HE S} with initial 
		data $f\in\mathcal{C}_{c}(\mathbb{X})$ satisfies
		\begin{align}\label{S4 Lp disting}
			t^{\frac{u+n}{4p'}}
			\|u(t,\,\cdot\,)-
			\widetilde{M}\,\widetilde{h}_{t}\|_{L^{p}(\widetilde{\mu})}\,
			\longrightarrow\,0
			\qquad\textnormal{as}\quad\,t\rightarrow\infty,
		\end{align}
		for all $1<p<\infty$.
	\end{corollary}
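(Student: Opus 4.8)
The plan is to obtain \eqref{S4 Lp disting} by pure interpolation, feeding in the two endpoint convergences already proved in \cref{S1 Main thm 2}: the $L^1$ statement \eqref{S1 L1 disting} and the rescaled $L^\infty$ statement \eqref{S1 Linf disting}. Here $p'$ denotes the conjugate exponent, $\tfrac1p+\tfrac1{p'}=1$, and (correcting an evident typo) the time factor in \eqref{S4 Lp disting} is $t^{(\nu+n)/(4p')}$. No further information on the heat kernel $\widetilde h_t$ is needed beyond what \cref{S1 Main thm 2} already provides.

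Write $v_t := u(t,\cdot) - \widetilde M\,\widetilde h_t$ and fix $1<p<\infty$. Since $\tfrac1p = (1-\theta)\cdot 1 + \theta\cdot 0$ with $\theta = 1-\tfrac1p = \tfrac1{p'}$, the log-convexity of the $L^p(\widetilde\mu)$-norms --- a one-line consequence of H\"older's inequality, since $\int_{\mathbb X}|v_t|^p\,d\widetilde\mu \le \|v_t\|_{L^\infty(\widetilde\mu)}^{\,p-1}\int_{\mathbb X}|v_t|\,d\widetilde\mu$ --- gives
\[
\|v_t\|_{L^p(\widetilde\mu)}\,\le\,\|v_t\|_{L^1(\widetilde\mu)}^{\,1/p}\,\|v_t\|_{L^\infty(\widetilde\mu)}^{\,1/p'}.
\]
Multiplying through by $t^{(\nu+n)/(4p')}$ and pushing the power inside the last factor yields
\[
t^{\frac{\nu+n}{4p'}}\,\|v_t\|_{L^p(\widetilde\mu)}\,\le\,\|v_t\|_{L^1(\widetilde\mu)}^{\,1/p}\,\Big(t^{\frac{\nu+n}{4}}\,\|v_t\|_{L^\infty(\widetilde\mu)}\Big)^{1/p'}.
\]
By \eqref{S1 L1 disting} the first factor tends to $0$ and by \eqref{S1 Linf disting} the bracketed quantity tends to $0$; as $1/p,1/p'>0$, the product tends to $0$ as $t\to\infty$, which is \eqref{S4 Lp disting}.

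There is no genuine obstacle here: the argument is just H\"older (equivalently, Riesz--Thorin applied to the identity map) together with the already-established endpoint asymptotics, and the exponent $(\nu+n)/(4p')$ is exactly the one dictated by linear interpolation between the endpoint rates $0$ (at $p=1$) and $(\nu+n)/4$ (at $p=\infty$). The only point worth noting in passing is that $v_t$ indeed lies in $L^1(\widetilde\mu)\cap L^\infty(\widetilde\mu)$ for each fixed $t$ --- immediate since $e^{t\Delta_{\widetilde\mu}}$ is a Markov semigroup on $(\mathbb X,\widetilde\mu)$, $\widetilde M$ is bounded, and $\widetilde h_t\in L^1(\widetilde\mu)\cap L^\infty(\widetilde\mu)$ --- so every norm above is finite and the interpolation inequality is legitimate.
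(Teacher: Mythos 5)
Your argument is correct and is exactly what the paper intends by ``by convexity'': the log-convexity (H\"older) inequality $\|v_t\|_{L^p(\widetilde\mu)}\le\|v_t\|_{L^1(\widetilde\mu)}^{1/p}\|v_t\|_{L^\infty(\widetilde\mu)}^{1/p'}$ combined with the two endpoint convergences of \cref{S1 Main thm 2}, and you correctly identify the exponent as $(\nu+n)/(4p')$ (the $u$ in \eqref{S4 Lp disting} being a typo for $\nu$). Nothing further is needed.
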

	
	\subsection{Heat asymptotics for other initial data}\label{Subsect other data}
	We have obtained above the long-time asymptotic convergence in $L^{p}$
	($1\le{p}\le\infty$) for the relativized heat equation 
	with compactly supported initial data.
	It is natural and interesting to ask whether the expected convergence
	still holds when the initial data lie in larger functional spaces.
	The following corollaries give positive examples,
	but the optimal answer remains open.
	
	\begin{corollary}
		The asymptotic convergences \eqref{S1 L1 disting} and \eqref{S1 Linf disting}, 
		hence \eqref{S4 Lp disting}, still hold with initial data 
		$f\,\in{L}^{1}(\widetilde{\mu})$ bi-$K$-invariant.
	\end{corollary}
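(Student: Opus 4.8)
The plan is the standard density-and-uniform-bounds reduction to the case of continuous compactly supported data, already settled in \cref{S1 Main thm 2}. Fix a bi-$K$-invariant $f\in L^{1}(\widetilde{\mu})$. Since $\mathcal{C}_{c}(\mathbb{X})$ is dense in $L^{1}(\widetilde{\mu})$ and left $K$-averaging is an $L^{1}(\widetilde{\mu})$-contraction that fixes $f$, for every $\eta>0$ there is a bi-$K$-invariant $\phi\in\mathcal{C}_{c}(\mathbb{X})$ with $\|f-\phi\|_{L^{1}(\widetilde{\mu})}<\eta$. By \cref{rmk mass} the mass of any bi-$K$-invariant datum is the constant $\widetilde{M}=\int_{\mathbb{X}}f\,\diff{\widetilde{\mu}}$, so that $f\mapsto\widetilde{M}$ is linear and $|\widetilde{M}-\widetilde{M}_{\phi}|\le\|f-\phi\|_{L^{1}(\widetilde{\mu})}<\eta$. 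The two facts that drive the argument are: (i) $e^{t\Delta_{\widetilde{\mu}}}$ is a contraction on $L^{1}(\widetilde{\mu})$ — because $\widetilde{h_t}\,\diff{\widetilde{\mu}}$ is a probability measure and $\widetilde{h_t}(xK,yK)=\widetilde{h_t}(yK,xK)$, which together give $\int_{\mathbb{X}}\widetilde{h_t}(xK,yK)\,\diff{\widetilde{\mu}}(xK)=1$ — together with $\|\widetilde{h_t}\|_{L^{1}(\widetilde{\mu})}=1$; and (ii) the sup-norm bound $\|\widetilde{h_t}\|_{L^{\infty}(\widetilde{\mu})}\asymp t^{-(\nu+n)/4}$ of \eqref{S4 htilde estimate}, together with the uniform smoothing estimate
\[
t^{\frac{\nu+n}{4}}\,\|e^{t\Delta_{\widetilde{\mu}}}\psi\|_{L^{\infty}(\widetilde{\mu})}\,\lesssim\,\|\psi\|_{L^{1}(\widetilde{\mu})}\qquad\text{for bi-}K\text{-invariant }\psi\in L^{1}(\widetilde{\mu}).
\]

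Granting (i) and (ii), the rest is a routine approximation argument. Set $u=e^{t\Delta_{\widetilde{\mu}}}f$ and $u_{\phi}=e^{t\Delta_{\widetilde{\mu}}}\phi$, and split
\[
u-\widetilde{M}\,\widetilde{h_t}\,=\,e^{t\Delta_{\widetilde{\mu}}}(f-\phi)\,+\,\bigl(u_{\phi}-\widetilde{M}_{\phi}\,\widetilde{h_t}\bigr)\,+\,\bigl(\widetilde{M}_{\phi}-\widetilde{M}\bigr)\widetilde{h_t}.
\]
Taking $L^{1}(\widetilde{\mu})$ norms, the first summand is $\le\|f-\phi\|_{L^{1}(\widetilde{\mu})}<\eta$ by (i), the third equals $|\widetilde{M}_{\phi}-\widetilde{M}|<\eta$ since $\|\widetilde{h_t}\|_{L^{1}(\widetilde{\mu})}=1$, and the middle one tends to $0$ as $t\to\infty$ by \eqref{S1 L1 disting} applied to $\phi\in\mathcal{C}_{c}(\mathbb{X})$; letting $\eta\downarrow0$ gives \eqref{S1 L1 disting} for $f$. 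Multiplying the same splitting by $t^{(\nu+n)/4}$ and taking $L^{\infty}(\widetilde{\mu})$ norms, the first summand is $\lesssim\|f-\phi\|_{L^{1}(\widetilde{\mu})}<\eta$ by the smoothing estimate in (ii) (here $f-\phi$ is bi-$K$-invariant), the third is $\le|\widetilde{M}_{\phi}-\widetilde{M}|\cdot t^{(\nu+n)/4}\|\widetilde{h_t}\|_{L^{\infty}(\widetilde{\mu})}\lesssim\eta$ by \eqref{S4 htilde estimate}, and the middle one tends to $0$ by \eqref{S1 Linf disting} for $\phi$; this gives \eqref{S1 Linf disting} for $f$. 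Finally \eqref{S4 Lp disting} follows by the same convexity interpolation as in the preceding corollary.

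I expect the uniform smoothing estimate in (ii) to be the only point requiring real work. It cannot come from a pointwise bound on the two-variable kernel: $\widetilde{h_t}(xK,yK)=e^{|\rho|^{2}t}\,h_{t}(y^{-1}x)/(\varphi_{0}(x)\varphi_{0}(y))$ is unbounded when $d(xK,yK)$ stays bounded while $xK,yK\to\infty$, since $1/\varphi_{0}$ blows up there. Bi-$K$-invariance of $\psi$ is precisely what rescues it: as $\diff{\widetilde{\mu}}$ is $K$-invariant one may replace $\widetilde{h_t}(gK,yK)$ by the average $\int_{K}\widetilde{h_t}(gK,kyK)\,\diff{k}$, and using the splitting \eqref{phi0split} together with the heat kernel estimates \eqref{S2 heat kernel}--\eqref{S2 heat kernel critical region} one checks that $\int_{K}\widetilde{h_t}(gK,kyK)\,\diff{k}=\mathrm{O}\bigl(t^{-(\nu+n)/4}\bigr)$ uniformly in $g,y$; integrating against $\psi\,\diff{\widetilde{\mu}}$ then yields (ii). (One could instead bypass (ii) by re-running the proof of \eqref{S1 Linf disting} with a slowly growing support radius $\xi(t)$ in place of the fixed $\xi$: split the $y$-integral in \eqref{S4 difference} into $|y|\le\xi(t)$, where \cref{S4 Lemma ratios difference} applies, and a tail $|y|>\xi(t)$, whose contribution is controlled by the $L^{1}(\widetilde{\mu})$-smallness of $f$ there; but the density argument above is shorter.)
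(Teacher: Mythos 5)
Your argument is, in structure, exactly the paper's: approximate $f$ in $L^{1}(\widetilde{\mu})$ by a bi-$K$-invariant $\phi\in\mathcal{C}_{c}(\mathbb{X})$, split $u-\widetilde{M}\,\widetilde{h_t}$ into the same three terms, control the first by a mapping property of the semigroup, the third by $|\widetilde{M}-\widetilde{M}_{\phi}|\le\|f-\phi\|_{L^{1}(\widetilde{\mu})}$, and the middle by \cref{S1 Main thm 2}; interpolation then gives the $L^{p}$ case. Where you go beyond the paper is the $L^{\infty}$ step: the paper simply writes $\|u-u_{0}\|_{L^{\infty}(\widetilde{\mu})}\le\|f-f_{0}\|_{L^{1}(\widetilde{\mu})}\,\|\widetilde{h}_{t}\|_{L^{\infty}(\widetilde{\mu})}$, and you are right that this cannot be read off a pointwise bound on the two-variable kernel $\widetilde{h_t}(xK,yK)$, which is unbounded near the diagonal at infinity. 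Your proposed repair --- use bi-$K$-invariance of $f-\phi$ to replace the kernel by its $K$-average and invoke the product formula \eqref{phi0split} --- is the right mechanism and is a genuine gain in rigor over the paper's one-line justification.

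However, the quantitative claim in your point (ii) is false in general. Combining \eqref{S2 heat kernel inv} with $\int_{K}\diff{k}\,\varphi_{\lambda}(xky)=\varphi_{\lambda}(x)\varphi_{\lambda}(y)$ and $|\varphi_{\lambda}|\le\varphi_{0}$ gives
\begin{align*}
\int_{K}\diff{k}\,\widetilde{h_t}(gK,kyK)\;\le\;\widetilde{h_t}(eK,eK)\;=\;e^{|\rho|^{2}t}\,h_{t}(e)\;\asymp\;t^{-\nu/2},
\end{align*}
and this bound is attained at $g=y=e$. Testing (ii) on a bi-$K$-invariant approximate identity therefore yields $t^{(\nu+n)/4}\|e^{t\Delta_{\widetilde{\mu}}}\psi\|_{L^{\infty}(\widetilde{\mu})}\gtrsim t^{(n-\nu)/4}$, which blows up whenever $n>\nu$ (e.g.\ real hyperbolic space of dimension $n\ge4$, where $\nu=3$). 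So the smoothing estimate you need holds with exponent $\nu/2$, not $(\nu+n)/4$, and your argument closes only when $n\le\nu$. To be fair, this discrepancy is not of your making: it is inherited from \eqref{S4 htilde estimate}, whose proof passes from the first to the second line of \eqref{S4 htilde1} by an ``$\asymp$'' that is valid only where $\langle\alpha,g^{+}\rangle\gtrsim t$ (at $g^{+}=0$ the two lines differ by the factor $t^{(n-\nu)/4}$); the correct value of $\|\widetilde{h}_{t}\|_{L^{\infty}(\widetilde{\mu})}$ is $\asymp t^{-\nu/2}$, consistent with the infinite Brownian loop behaving like Brownian motion on $\mathbb{R}^{\nu}$. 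With that correction, both your estimate (ii) and the paper's display become correct with $(\nu+n)/4$ replaced by $\nu/2$, and your proof of the corollary then goes through verbatim.
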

	
	\begin{proof}
		Notice from  \eqref{S4 radial mass} that 
		the mass function is a constant equal to $\widetilde{M}=\int_{\mathbb{X}}\diff{\widetilde{\mu}}\,f$ under the present assumption.
		Let us start with the $L^{1}$ convergence. 
		Given $\varepsilon>0$, let
		$f_0\,\in\mathcal{C}_{c}(\mathbb{X})$,
		with $f_{0}$ bi-$K$-invariant, be such that $\|f
		-f_{0}\|_{L^{1}(\widetilde{\mu})}<\tfrac{\varepsilon}{3}$. Then the corresponding mass function $\widetilde{M}_0=\int_{\mathbb{X}}\diff{\widetilde{\mu}}\,f_0$
		is also a constant.
		We observe firstly that the solution to the Cauchy problem
		\begin{align*}
			\partial_{t}u_0(t,gK)\,
			=\,\Delta_{\widetilde{\mu}}u_0(t,gK),
			\qquad
			u_0(0,gK)\,=\,f_0(gK)
		\end{align*}
		satisfies
		\begin{align}\label{S4 density1}
			\|u(t,\,\cdot\,)-u_0(t,\,\cdot\,)\|_{L^{1}(\widetilde{\mu})}\,
			=\,\left\|\int_{\mathbb{X}}\diff\widetilde{\mu}(yK)(f-f_0)(y)\widetilde{h}_{t}(\cdot, y)
			\right\|_{L^{1}(\widetilde{\mu})} 
			\le\,\|f-f_{0}\|_{L^{1}(\widetilde{\mu})}\,
			\|\widetilde{h}_{t}\|_{L^{1}(\widetilde{\mu})}\,
			<\,\tfrac{\varepsilon}{3},
		\end{align}
		since $\|\widetilde{h}_{t}\|_{L^{1}(\widetilde{\mu})}=1$. Secondly, there exists $T>0$ such that for all $t\ge{T}$,
		\begin{align}\label{S4 density2}
			\|u_0(t,\,\cdot\,)
			-\widetilde{M}_0\,\widetilde{h}_{t}\|_{L^{1}(\widetilde{\mu})}\,
			<\,\tfrac{\varepsilon}{3}
		\end{align}
		according to \cref{S1 Main thm 2}.
		Under the bi-$K$-invariance assumption, we have from \eqref{S4 radial mass} that
		\begin{align*}
			\widetilde{M}-\widetilde{M}_0\,
			=\,\int_{\mathbb{X}}\diff{\widetilde{\mu}(gK)}\,(f(gK)-{f}_{0}(gK)).
		\end{align*}
		Hence, we have thirdly
		\begin{align}\label{S4 density3}
			\|(\widetilde{M}-\widetilde{M}_0)\,\widetilde{h}_{t}
			\|_{L^{1}(\widetilde{\mu})}\,
			\le\,\|f-f_{0}\|_{L^{1}(\widetilde{\mu})}\,
			\|\widetilde{h}_{t}\|_{L^{1}(\widetilde{\mu})}\,
			<\,\tfrac{\varepsilon}{3}.
		\end{align}
		In conclusion, by putting \eqref{S4 density1}, \eqref{S4 density2} and
		\eqref{S4 density3} altogether, we obtain
		\begin{align*}
			\|u(t,\,\cdot\,)
			-\widetilde{M}\,\widetilde{h}_{t}\|_{L^{1}(\widetilde{\mu})}\,
			<\,\varepsilon
		\end{align*}
		for all $\varepsilon>0$.
		Let us turn to the $L^{\infty}$ convergence.
		According to \eqref{S4 htilde estimate} and \cref{S1 Main thm 2}, we have this time 
		\begin{align*}
			t^{\frac{\nu+n}{4}}\,
			\|u(t,\,\cdot\,)-u_0(t,\,\cdot\,)
			\|_{L^{\infty}(\widetilde{\mu})}\,
			\le\,\|f-f_{0}\|_{L^{1}(\widetilde{\mu})}\,
			t^{\frac{\nu+n}{4}}\,
			\|\widetilde{h}_{t}\|_{L^{\infty}(\widetilde{\mu})}\,  
			\lesssim\,\tfrac{\varepsilon}{3},
		\end{align*}
		\begin{align*}
			t^{\frac{\nu+n}{4}}\,
			\|u_0(t,\,\cdot\,)
			-\widetilde{M}_{0}\,\widetilde{h}_{t}\|_{L^{\infty}(\widetilde{\mu})}\,
			<\,\tfrac{\varepsilon}{3},
		\end{align*}
		and 
		\begin{align*}
			t^{\frac{\nu+n}{4}}\,
			\|(\widetilde{M}-\widetilde{M}_{0})\,\widetilde{h}_{t}
			\|_{L^{\infty}(\widetilde{\mu})}\,
			\le\,
			\|f-f_{0}\|_{L^{1}(\widetilde{\mu})}\,
			t^{\frac{\nu+n}{4}}\,
			\|\widetilde{h}_{t}\|_{L^{\infty}(\widetilde{\mu})}\,
			\lesssim\,\tfrac{\varepsilon}{3}.
		\end{align*}
		Altogether,
		\begin{align*}
			t^{\frac{\nu+n}{4}}\,
			\|\widetilde{v}(t,\,\cdot\,)
			-\widetilde{M}\,\widetilde{h}_{t}\|_{L^{\infty}(\widetilde{\mu})}\,
			\longrightarrow\,0
		\end{align*}
		as $t\rightarrow\infty$.
		The $L^{p}$ convergence follows from interpolation.
	\end{proof}
	
	\begin{corollary}
		The asymptotic convergences \eqref{S1 L1 disting} and \eqref{S1 Linf disting}, 
		hence \eqref{S4 Lp disting}, 
		still hold under the assumption $\frac{f(gK)\,e^{\langle{\rho,g^{+}}\rangle}}{\varphi_{0}(g)}\in L^1({\widetilde{\mu}})$, i.e., 
		\begin{align}\label{S4 other data assumption}
			\int_{G}\diff{g}\,|f(gK)|\,\varphi_0(g)\,e^{\langle{\rho,g^{+}}\rangle}\,
			<\,\infty.
		\end{align}
	\end{corollary}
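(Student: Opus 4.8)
The plan is to reduce this to the compactly supported case by an approximation argument that mirrors the one just given for bi-$K$-invariant $L^1$ data, but with the role of the $L^1(\widetilde{\mu})$ norm replaced by the weighted norm appearing in \eqref{S4 other data assumption}. First I would observe that the natural quantity to control is $\int_G \diff{g}\,|f(gK)|\,\varphi_0(g)\,e^{\langle\rho,g^+\rangle}$, call it $\|f\|_{*}$; assumption \eqref{S4 other data assumption} says $\|f\|_* < \infty$, and one checks via \eqref{S2 global estimate phi0} that $\mathcal{C}_c(\mathbb{X})$ is dense in this weighted space. So given $\varepsilon > 0$, pick $f_0 \in \mathcal{C}_c(\mathbb{X})$ with $\|f - f_0\|_* < \varepsilon$, and let $u_0$ solve \eqref{S1 HE S} with datum $f_0$, with mass function $\widetilde{M}_0$ given by \eqref{massNEW}. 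The triangle inequality then splits the error into $\|u - u_0\|$, $\|u_0 - \widetilde{M}_0\widetilde{h_t}\|$ (which goes to zero by \cref{S1 Main thm 2}), and $\|(\widetilde{M} - \widetilde{M}_0)\widetilde{h_t}\|$, in both the $L^1(\widetilde{\mu})$ and (rescaled) $L^\infty(\widetilde{\mu})$ norms.

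The key step is estimating the first and third pieces by $\|f - f_0\|_*$ uniformly in $t$. For the difference of solutions, write as in \eqref{solNEW}
\begin{align*}
	(u - u_0)(t,gK)\,=\,\widetilde{h_t}(g)\int_G \diff{y}\,\varphi_0(y)\,(f-f_0)(yK)\,\frac{h_t(g^{-1}y)}{h_t(g^{-1})}.
\end{align*}
For the $L^1(\widetilde{\mu})$ bound I would integrate $\diff{\widetilde{\mu}}(gK)$ first, using Tonelli and the fact that $\int_{\mathbb{X}}\diff{\widetilde{\mu}}(gK)\,\widetilde{h_t}(gK,yK)=1$ for every fixed $y$, to get
\begin{align*}
	\|u-u_0\|_{L^1(\widetilde{\mu})}\,\le\,\int_G \diff{y}\,\varphi_0(y)\,|f-f_0|(yK)\,=\,\int_{\mathbb{X}}\diff{\widetilde{\mu}}(yK)\,\frac{|f-f_0|(yK)}{\varphi_0(y)}.
\end{align*}
Since $\varphi_0(y)^{-1} \lesssim e^{\langle\rho,y^+\rangle}$ by \eqref{S2 global estimate phi0} (up to the polynomial factor, which is absorbed), this is $\lesssim \|f - f_0\|_* < \varepsilon$. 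For the $L^\infty$ bound, use instead $\widetilde{h_t}(g^{-1}y) = e^{|\rho|^2 t} h_t(g^{-1}y)/(\varphi_0(g)\varphi_0(y))$ and the sup bound $\|\widetilde{h_t}\|_{L^\infty(\widetilde{\mu})} \lesssim t^{-(\nu+n)/4}$ from \eqref{S4 htilde estimate}, together with the local Harnack-type control of $\varphi_0(g^{-1}y)/\varphi_0(g)$ — but here $y$ is no longer bounded, so I would instead estimate $|(u-u_0)(t,gK)| \lesssim \int_G \diff{y}\,|f-f_0|(yK)\,\widetilde{h_t}(gK,yK)$ and bound $\widetilde{h_t}(gK,yK) = e^{|\rho|^2 t}h_t(y^{-1}g)/(\varphi_0(g)\varphi_0(y)) \lesssim t^{-(\nu+n)/4}\,e^{\langle\rho,y^+\rangle}$ uniformly in $g$, using \eqref{S2 heat kernel}, \eqref{S2 global estimate phi0} and the spherical splitting \eqref{S4 spherical split} (or \eqref{phi0split}). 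Then $t^{(\nu+n)/4}\|u-u_0\|_{L^\infty(\widetilde{\mu})} \lesssim \|f-f_0\|_* < \varepsilon$.

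For the mass term, from \eqref{massNEW} we have $(\widetilde{M}-\widetilde{M}_0)(g) = \varphi_0(g)^{-1}\int_G \diff{y}\,(f-f_0)(yK)\,\varphi_0(y)\,\varphi_0(y^{-1}g)$; since $\widetilde{M}-\widetilde{M}_0$ is bounded (the same Harnack argument as in the remark after \eqref{massNEW}, but now one must check that the $*$-weighted integrability suffices, which it does because $\varphi_0(y^{-1}g)/\varphi_0(g) \lesssim e^{\langle\rho,y^+\rangle}$ by \eqref{S4 spherical split}), we get $|\widetilde{M}(g)-\widetilde{M}_0(g)| \lesssim \|f-f_0\|_*$ uniformly in $g$, hence $\|(\widetilde{M}-\widetilde{M}_0)\widetilde{h_t}\|_{L^1(\widetilde{\mu})} \lesssim \|f-f_0\|_*$ and $t^{(\nu+n)/4}\|(\widetilde{M}-\widetilde{M}_0)\widetilde{h_t}\|_{L^\infty(\widetilde{\mu})} \lesssim \|f-f_0\|_*$. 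Combining the three pieces gives both \eqref{S1 L1 disting} and \eqref{S1 Linf disting}, and the $L^p$ statement \eqref{S4 Lp disting} follows by interpolation.

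The main obstacle I anticipate is the uniform-in-$g$ control of the ratio $\varphi_0(y^{-1}g)/\varphi_0(g)$ when $y$ ranges over an unbounded set: this is exactly where assumption \eqref{S4 other data assumption} is tailored, and the clean way to handle it is the identity \eqref{S4 spherical split}, which gives $\varphi_0(g^{-1}y) = \int_K \diff{k}\,e^{\langle\rho,A(kg)\rangle}e^{\langle\rho,A(ky)\rangle} \le \varphi_0(g)\sup_{k}e^{\langle\rho,A(ky)\rangle} \lesssim \varphi_0(g)\,e^{\langle\rho,y^+\rangle}$, since $\langle\rho,A(ky)\rangle \le |\rho|\,|y|$ and, more precisely, $e^{\langle\rho,A(ky)\rangle} \lesssim e^{\langle\rho,y^+\rangle}$ follows from $\varphi_0(y) \asymp$ the right side of \eqref{S2 global estimate phi0}. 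Everything else is a routine repetition of the density argument already carried out for bi-$K$-invariant data, with $\|\cdot\|_{L^1(\widetilde{\mu})}$ systematically replaced by $\|\cdot\|_*$.
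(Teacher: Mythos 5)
Your proposal is correct and follows essentially the same route as the paper: approximate $f$ by $f_0\in\mathcal{C}_c(\mathbb{X})$ in the weighted norm $\|f\|_*=\int_G\diff{g}\,|f(gK)|\,\varphi_0(g)\,e^{\langle\rho,g^+\rangle}$, split the error into three terms, and control $\widetilde{M}-\widetilde{M}_0$ uniformly via $\varphi_0(y^{-1}g)\le e^{\langle\rho,y^+\rangle}\varphi_0(g)$, which follows from \eqref{S4 spherical split} together with $\langle\rho,A(ky)\rangle\le\langle\rho,y^+\rangle$. The only issue is bookkeeping in the $L^\infty$ piece: the integral should be against $\diff\widetilde{\mu}(yK)=\varphi_0(y)^2\diff{y}$ rather than $\diff{y}$, and the uniform-in-$g$ kernel bound is $\widetilde{h_t}(gK,yK)\lesssim t^{-\frac{\nu+n}{4}}\,e^{\langle\rho,y^+\rangle}/\varphi_0(y)$ — the factor $1/\varphi_0(y)$ cannot be dropped, since $\varphi_0(y^{-1}g)/(\varphi_0(g)\varphi_0(y))$ is unbounded (take $g=y$ with $|y|\to\infty$) — but these two corrections exactly compensate and yield $t^{\frac{\nu+n}{4}}\|u-u_0\|_{L^\infty(\widetilde{\mu})}\lesssim\|f-f_0\|_*$, so your conclusion stands.
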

	
	\begin{proof}
		We first recall that according to \cite[Lemma 4.8]{APZ2023},  we have that for all $g\in G$,
		\begin{align}\label{S4 ineq Iwasawa Cartan}
			\langle{\rho,A(g)}\rangle\,
			\le\,\langle{\rho,g^{+}}\rangle
		\end{align}
		where $A(g)$ denotes the $\mathfrak{a}$-component of $g$ in the Iwasawa 
		decomposition and $g^{+}$ denotes its $\overline{\mathfrak{a}^{+}}$-component
		in the Cartan decomposition.

		Then, we have
		\begin{align*}
			\int_{G}\diff{g}\,|f(gK)|\varphi_{0}(g)^2 = \int_{G}\diff{g}\,|f(gK)|\varphi_{0}(g)\int_{K} \diff{k}\;e^{\langle \rho, A(kg)\rangle}
			\leq \int_{G}\diff{g}\,|f(gK)|\,\varphi_{0}(g)\,e^{\langle{\rho,g^{+}}\rangle}.
		\end{align*}
		Here, we used the inequality \eqref{S4 ineq Iwasawa Cartan} and
		the fact that $(ky)^{+}=y^{+}$ for all $k\in{K}$.	Hence, the assumption \eqref{S4 other data assumption} is indeed stronger than
		$f\in{L^{1}(\widetilde{\mu})}$. Under this assumption, the mass function is
		bounded:
		\begin{align*}
			|\widetilde{M}(g)|\,
			&\le\,\tfrac{1}{\varphi_{0}(g)}\,
			\int_{G}\diff{y}\,|f(yK)|\,\varphi_0(y)\,\varphi_{0}(y^{-1}g)\\[5pt]
			&=\,\tfrac{1}{\varphi_{0}(g)}\,
			\int_{G}\diff{y}\,|f(yK)|\,\varphi_0(y)\,\,\int_{K}\diff{k}\,
			e^{\langle{\rho,A(kg)}\rangle}e^{\langle{\rho,A(ky)}\rangle}\\[5pt]
			&\le\,
			\underbrace{\vphantom{\Big|}
				\tfrac{1}{\varphi_{0}(g)}\,
				\int_{K}\diff{k}\,e^{\langle{\rho,A(kg)}\rangle}
			}_{=\,1}
			\underbrace{\vphantom{\Big|}
				\int_{G}\diff{y}\,|f(yK)|\,\varphi_0(y)\,e^{\langle{\rho,y^{+}}\rangle}
			}_{=\,C}\,
			=\,C.
		\end{align*}

		For proving the $L^{1}$ and the $L^{\infty}$ convergences, 
		we argue again by density.
		Since $\frac{f(gK)\,e^{\langle{\rho,g^{+}}\rangle}}{\varphi_{0}(g)}\in L^1({\widetilde{\mu}})$
		there exists a function $\frac{f_0(gK)\,e^{\langle{\rho,g^{+}}\rangle}}{\varphi_{0}(g)}$ in
		$\mathcal{C}_{c}(\mathbb{X})$ such that 
		
		\begin{align*}
			\int_{\mathbb{X}}\diff{\widetilde{\mu}}(gK)\,
			\frac{|f(gK)-f_{0}(gK)|\,e^{\langle{\rho,g^{+}}\rangle}}{\varphi_0(g)}\,=\,\int_{\mathbb{X}}\diff{\mu}(gK)\,{\varphi_0(g)}\,
			|f(gK)-f_{0}(gK)|\,e^{\langle{\rho,g^{+}}\rangle}
			<\,\tfrac{\varepsilon}{3}
		\end{align*}
		for every $\varepsilon>0$. 
		Let $u_0=e^{t\Delta_{\widetilde{\mu}}}f_0$ be the corresponding
		solution to the relativized heat equation, and denote by 
		$\widetilde{M}_{0}(g)=\tfrac{(\frac{f_0}{\varphi_0}*\varphi_{0})(gK)}{\varphi_{0}(g)}$ 
		the corresponding mass of $f_{0}$. On the one hand, there exists $T>0$
		such that for all $t\geq T$, we have
		\begin{align*}\label{S4 density2}
			\|u_0(t,\,\cdot\,)
			-\widetilde{M}_{0}\,\widetilde{h}_{t}\|_{L^{1}(\widetilde{\mu})}\,
			<\,\tfrac{\varepsilon}{3}
		\end{align*}
		since $f_{0}\in\mathcal{C}_{c}(\mathbb{X})$. 
		On the other hand, for every $g\in{G}$, we have
		\begin{align*}
			|\widetilde{M}(g)-\widetilde{M}_{0}(g)|\,
			&\le\,\tfrac{1}{\varphi_{0}(g)}\,
			\int_{G}\diff{y}\,|f(yK)-f_{0}(yK)|\,\varphi_0(y)\,\varphi_{0}(y^{-1}g)\\[5pt]
			&\le\,\int_{G}\diff{y}\,\varphi_{0}(y)\,|f(yK)-f_{0}(gK)|\,
			e^{\langle{\rho,y^{+}}\rangle}\,
			<\,\tfrac{\varepsilon}{3}.
		\end{align*}
		We conclude by resuming the proof of the previous corollary.
	\end{proof}


	\vspace{10pt}\noindent\textbf{Acknowledgements.}
	The author is grateful to J.-Ph. Anker for suggesting the problem and for making valuable comments on a first version of the manuscript.  The author is supported by the Hellenic Foundation for Research and Innovation, Project HFRI-FM17-1733.
	\printbibliography
	

	\vspace{10pt}
	\address{
		\noindent\textsc{Effie Papageorgiou:}
		\href{mailto:papageoeffie@gmail.com}
		{papageoeffie@gmail.com}\\
		Department of Mathematics and Applied Mathematics,
		University of Crete,
		Heraklion, Greece}
	

\end{document}